\newcommand{\stkout}[1]{\ifmmode\text{\sout{\ensuremath{#1}}}\else\sout{#1}\fi}
\numberwithin{equation}{section}
\newtheorem{prop}{Proposition}
\newtheorem{lemma}[prop]{Lemma}
\newtheorem{thm}[prop]{Theorem}
\numberwithin{prop}{section}
\newtheorem{defn}[prop]{Definition}
\theoremstyle{definition}
\newtheorem{rmk}[prop]{Remark}
\definecolor{c1}{rgb}{0.2,0.4,0.5}
\definecolor{c2}{rgb}{0.1,0.3,0.5}
\definecolor{c3}{rgb}{0.2,0.7,0.5}
\newcommand{\oo}[1]{\overline{#1}}
\newcommand{\nab}{\nabla}
\newcommand{\w}{\wedge}
\newcommand{\dbar}{\oo\partial}
\newcommand{\mE}{ \mathcal{E}}  
\newcommand{\suchthat}{\mathrel{}\middle|\mathrel{}}
\DeclareMathOperator{\tr}{tr}
\DeclareMathOperator{\Real}{Re}
\DeclareMathOperator{\Ima}{Im}
\begin{document}
\pagenumbering{arabic}

\title[]{The Dirichlet principle for the complex $k$-Hessian functional}

\begin{abstract} We study the variational structure of the complex $k$-Hessian equation on bounded domain $X\subset \mathbb C^n$ with boundary $M=\partial X$. We prove that the Dirichlet problem $\sigma_k (\partial \bar{\partial} u) =0$ in $X$, and $u=f$ on $M$ is variational and we give an explicit construction of the associated functional $\mE_k(u)$. Moreover we prove $\mE_k(u)$ satisfies the Dirichlet 
principle. In a special case when $k=2$, our constructed functional $\mE_2(u)$ involves the Hermitian mean curvature of the boundary, the notion first introduced and studied by X. Wang \cite{Wang}. Earlier work of J. Case and and the first author of this article \cite{CaseWang} introduced a boundary operator for the (real) $k$-Hessian functional which satisfies the Dirichlet principle. The present paper shows that there is a parallel picture in the complex setting. 
\end{abstract}


\author [Wang]{Yi Wang}
\address{Department of Mathematics, Johns Hopkins University, Baltimore, MA 21218, USA} \email{\href{mailto:ywang@math.jhu.edu}{ywang@math.jhu.edu}}

\author [Xu]{Hang Xu}
\address{Department of Mathematics, University of California San Diego, La Jolla, CA 92093, USA}
\email{\href{mailto:h9xu@ucsd.edu}{h9xu@ucsd.edu}}

\maketitle

\section{Introduction}
\label{sec:intro} 

Let $X\subset\mathbb{C}^n$ be a bounded smooth domain with boundary $M=\partial X$.  The usual Dirichlet principle states that
\begin{equation}
\label{model trace}
-\int_X u\Delta u\, dx + \oint_{M} fu_{\nu} d\mu\geq \oint_{M} f(u_f)_{\nu} d\mu
\end{equation}
for all $f\in C^\infty(M)$ and all $u\in C^\infty(\overline{X})$ such that $u\rvert_M=f$. Here $u_{\nu}$ denotes the derivative of $u$ with respect to the unit outward normal vector $\nu$ along $M$, $u_f$ is the harmonic function in $X$ such that $u_f\rvert_M=f$, and $dx$, $d\mu$ are the volume forms on $X$ and $M$, respectively. A standard density argument implies that the trace $u\mapsto u\rvert_M:=\tr u$ extends to a bounded linear operator $\tr\colon W^{1,2}(\overline{X})\to W^{1/2,2}(M)$, while the extension $f\mapsto u_f=:E(f)$ extends to a bounded linear operator $E\colon W^{1/2,2}(M)\to W^{1,2}(\overline{X})$ such that $\tr\circ E$ is the identity.

The Dirichlet principle is a useful tool in many analytic and geometric problems.  The initial observation is in regard to the Dirichlet-to-Neumann map $f\mapsto(u_f)_{\nu}$, a pseudodifferential operator of principle symbol $(-\Delta)^{1/2}$. When $X=\mathbb{R}_+^n$ is the upper half-space, it is the operator $(-\Delta)^{1/2}$.  Thus \eqref{model trace} relates the energy of the local operator $\Delta$ on $X$ to the energy of the nonlocal Dirichlet-to-Neumann operator, providing a useful tool for establishing estimates of PDEs stated in terms of the latter operator.  This strategy is a key motivation for the approach of Caffarelli and Silvestre \cite{CaSi07} to study fractional powers of the Laplacian. As another example, Escobar \cite{Es88,Es90} proved an analogue of \eqref{model trace} on compact manifolds with boundary and use it to recover a sharp Sobolev-trace inequality when $X=\mathbb{R}_+^n$. 
It thus leads to the embedding $W^{1,2}(\overline{\mathbb{R}_+^n})\subset L^{\frac{2(n-1)}{n-2}}(\mathbb{R}^{n-1})$ when $n\geq3$. 
This work is important to studying the boundary Yamabe problem \cite{Es92}.  By considering weights or higher-order operators, Caffarelli and Silvestre \cite{CaSi07} and R. Yang \cite{Yang13} established analogues of \eqref{model trace} for the energy of fractional powers of the Laplacian. Later, the work of \cite{Case15,CaCh16,ChGo11,ChYa17} established analogues of \eqref{model trace} for the energy of the conformal fractional Laplacian (the GJMS operators). 

In \cite{CaseWang}, Case and Wang established a Dirichlet principle for the fully nonlinear operator $\sigma_k(D^2 u)$, where $D^2u$ denotes the Hessian of $u$ on $\mathbb{R}^n$. Later, they \cite{CaseWang18} also developed this idea to study the $k$-curvature, the $k$-th elementary symmetric function of 
the Schouten tensor, on manifolds for $k=1,2$ or when $g$ is a locally conformally flat metric. The purpose of this article is to study 
if the complex $k$-Hessian energy $\sigma_k(D^{1,1}u)$ on $\mathbb{C}^n$ also satisfies the Dirichlet principle and what functional gives rise to it. 

To present the result, we first introduce some notations. In this paper, $D^{1,1}u$ denotes the complex Hessian of $u$, and the $k$-th elementary symmetric function $\sigma_k(A)$ of a Hermitian matrix $A$ (i.e. $\oo{A}^{\intercal}=A$) is defined by
\begin{equation*}
	\sigma_k(A) := \sum_{i_1<\dotsb<i_k} \lambda_{i_1}\dotsm\lambda_{i_k}
\end{equation*}
for $\lambda_1,\cdots,\lambda_n$ the eigenvalues of $A$.  The complex $k$-Hessian equation with Dirichlet boundary condition
\begin{equation} \label{dirichlet problem}
\begin{cases}
\sigma_k(D^{1,1}u) = F(x,u), & \text{in $X$}, \\
u = f(x), & \text{on $M$}
\end{cases}
\end{equation}
has been well-studied for functions $u$ in the elliptic $k$-cone
\begin{equation}
\label{kcone}
\Gamma_k^+ := \left\{ u\in C^\infty(\oo{X}) \suchthat \sigma_j(D^{1,1}u)>0, 1\leq j\leq k \right\}.
\end{equation}
Note that the existence of a solution to \eqref{dirichlet problem} requires that $M$ is \emph{$(k-1)$-pseudoconvex}; i.e. the Levi form $\mathcal{L}$ of $M$ must satisfy $\sigma_j(\mathcal{L})>0$ for $1\leq j\leq k-1$.
Also, in the degenerate case where $F\geq 0$, one needs to consider solutions $u\in \overline{\Gamma_k^+}$, where the closure of the elliptic $k$-cone \eqref{kcone} is with respect to the $C^{1,1}$-norm in $\oo{X}$. 
\eqref{dirichlet problem} is a fully nonlinear analogue of \eqref{model trace}, and is a generalization of complex Monge-Amp\`ere equations (where $k=n$). 

For real Monge-Amp\`ere equation as well as the $k$-Hessian equations, the existence of a unique classical solution for the Dirichlet problem on a domain in $\mathbb R^n$ was proved by Caffarelli, Nirenberg and Spruck in \cite{Caff85}. 
Many other results related to (real) geometric problems were studied by Urbas \cite{Urbas90}, Guan and Li \cite{GL94}, Guan and Guan \cite{GG02}, Chang, Gursky and Yang \cite{CGY02}, Guan and Ma \cite{GM03}, Guan, Lin and Ma \cite{GLM06}, Guan \cite{Guan07}, and references therein.
With regard to the complex Monge-Amp\`ere equations, the existence of plurisubharmonic solution on strictly pseudoconvex domain was proved by Caffarelli, Kohn, Nirenberg and Spruck \cite{Caff85}. In the meanwhile, existence and regularity of solutions to the degenerate ($F(x,u)\geq 0$) Monge-Amp\`ere equation were due to the work of Krylov \cite{Kr87, Kr89a, Kr89b, Kr90}. And it is generally believed that Krylov's method could also prove the same results for degenerate complex $k$-Hessian equations, though we could not find a good reference. (See also \cite{Zhou13}). There are many important results on complex Hessian equations on $\mathbb C^n$ and K\"ahler manifolds, e.g. \cite{CY80}, \cite{G98}, \cite{K98}, \cite{Li04}. Recently the results of \cite{Lu13, DK14, DL15, DK17, DPZ19} have made a lot of new developments on
degenerate complex Hessian equations on compact K\"ahler manifolds as well.


To further understand equation \eqref{dirichlet problem}, it is natural to study the variational structure of equation \eqref{dirichlet problem}. Namely, we would like to establish a fully nonlinear analogue of \eqref{model trace}. There are two questions that arise here. 
First, is there a functional whose critical points satisfy \eqref{dirichlet problem}? And second, if the answer to the first question is yes, does this functional satisfy the Dirichelt principle? The main result of this paper is that the answers to both of these questions are affirmative. 


Let
\begin{equation*}
\mathcal{C}_f := \left\{ u\in C^\infty(\overline{X}) \suchthat u\rvert_M = f \right\}
\end{equation*}
be the class of functions with fixed trace $f\in C^\infty(M)$. Our contribution is the following Dirichlet's principle for such solutions:
\begin{thm}
	\label{thm:main_thm}
	Fix $k\in\mathbb{N}$ and let $X\subset\mathbb{R}^n$ be a bounded $(k-1)$-pseudoconvex domain with boundary $M=\partial X$. Given $f\in C^\infty(M)$, let
	\[ \mathcal{C}_{f,k} := \left\{ u\in \mathcal{C}_f \suchthat D^{1,1}u\in\Gamma_k^+ \right\} . \]
	There is a functional $\mathcal{E}_{k+1}\colon C^2(X)\cap C^2(M)\cap C^1(\oo{X})\rightarrow \mathbb{R}$ such that every $u\in\overline{\mathcal{C}_{f,k}}$ satisfies
	\begin{equation}
	\label{eqn:k-sobolev-trace}
	\mathcal{E}_{k+1}(u) \geq \mathcal{E}_{k+1}(u_f)
	\end{equation}
	where $u_f$ is the unique solution to the Dirichlet problem
	\begin{equation}\label{eqn:degenerate_dirichlet_problem}
	\begin{cases}
	\sigma_k(D^{1,1}u) = 0, & \text{in $X$}, \\
	u = f, & \text{on $M$} ,
	\end{cases}
	\end{equation}
	and $\overline{\mathcal{C}_{f,k}}$ is the closure of $\mathcal{C}_{f,k}$ with respect to the $C^{1,1}$-norm in $\oo{X}$.
\end{thm}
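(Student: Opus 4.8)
The plan is to follow the blueprint of the real-variable argument of Case--Wang \cite{CaseWang}, adapted to the complex Hessian. The first task is to write down an explicit candidate functional. In the real case the interior term of $\mathcal{E}_{k+1}(u)$ is a multiple of $\int_X u\,\sigma_k(D^2u)\,dx$ (equivalently, a Dirichlet-type energy obtained by one integration by parts), plus a boundary term built from the $\sigma_k$-analogue of the Newton tensor applied to the second fundamental form of $M$. I would mimic this: set
\begin{equation*}
\mathcal{E}_{k+1}(u) = \frac{1}{k+1}\int_X u\,\sigma_k(D^{1,1}u)\,dV + \frac{1}{k+1}\oint_M f\,\mathcal{B}_k(u)\,d\mu,
\end{equation*}
where $\mathcal{B}_k(u)$ is the natural boundary operator obtained by contracting the Newton transform $T_{k-1}(D^{1,1}u)$ with the complex (Hermitian) second fundamental form of $M$; in the case $k=1$ this should reduce, up to normalization, to $u_\nu$, and for $k=2$ it should produce the Hermitian mean curvature term of X. Wang \cite{Wang} promised in the abstract. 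The key algebraic input is a complex divergence-structure (Reilly-type) identity: $\sigma_k(D^{1,1}u)$ equals $\frac{1}{k}\,\partial_i\partial_{\bar j}\!\big(T_{k-1}^{i\bar j}(D^{1,1}u)\,u\big)$ up to lower-order curvature-free terms, so that integrating $u\,\sigma_k(D^{1,1}u)$ by parts twice produces exactly the boundary integrand $\mathcal{B}_k$ together with a manifestly symmetric interior bilinear form in $u$. Establishing this identity — and in particular checking that the Newton tensor $T_{k-1}(D^{1,1}u)$ is divergence-free as a consequence of the commuting of $\partial$ and $\bar\partial$ — is the computational heart of the paper.

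Granting the functional, the proof of \eqref{eqn:k-sobolev-trace} proceeds in three steps. First, I would compute the first variation $\frac{d}{dt}\big|_{t=0}\mathcal{E}_{k+1}(u+t\varphi)$ for $\varphi\in C^\infty_c(X)$ and show it equals $\int_X \varphi\,\sigma_k(D^{1,1}u)\,dV$ (the factor $\frac{1}{k+1}$ and the boundary term are precisely engineered so that the $(k+1)$ copies of the variation landing on the interior term combine, and the boundary contributions cancel against $\partial X$-integrations since $\varphi$ has compact support); hence critical points of $\mathcal{E}_{k+1}$ in $\mathcal{C}_{f,k}$ solve $\sigma_k(D^{1,1}u)=0$. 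Second — and this is where the cone condition is essential — I would prove that $\mathcal{E}_{k+1}$ is \emph{convex} along the cone $\overline{\mathcal{C}_{f,k}}$, or more precisely that for $u\in\overline{\mathcal{C}_{f,k}}$ and $u_f$ the solution of \eqref{eqn:degenerate_dirichlet_problem} one has the pointwise/integrated inequality
\begin{equation*}
\mathcal{E}_{k+1}(u) - \mathcal{E}_{k+1}(u_f) \geq c_k \int_X (\text{nonnegative quantity})\,dV \geq 0 .
\end{equation*}
The standard tool here is the Gårding inequality / Newton--Maclaurin inequalities for $\sigma_k$ on the cone $\Gamma_k^+$, together with the fact that $T_{k-1}(D^{1,1}u_f)$ is positive semidefinite, which lets one bound the mixed term $\int_X u\,\sigma_k(D^{1,1}u_f)$-type expressions by a Cauchy--Schwarz-in-Gårding-form argument; because $\sigma_k(D^{1,1}u_f)=0$, several terms drop and the inequality collapses to a single sign-definite remainder. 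Third, I would reduce the case of general $u\in\overline{\mathcal{C}_{f,k}}$ to $u\in\mathcal{C}_{f,k}$ by the $C^{1,1}$-density built into the definition of the closure, using that $\mathcal{E}_{k+1}$ — which only involves up to second derivatives in the interior and on $M$, with an overall $C^1$ requirement — is continuous in the $C^{1,1}(\oo X)$ topology, so the inequality passes to the limit.

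The existence and uniqueness of $u_f$ solving \eqref{eqn:degenerate_dirichlet_problem} I would quote from the degenerate Dirichlet theory referenced in the introduction (Caffarelli--Kohn--Nirenberg--Spruck \cite{Caff85} and the Krylov-type regularity results \cite{Kr87,Kr89a,Kr89b,Kr90}, valid under the $(k-1)$-pseudoconvexity hypothesis on $M$); uniqueness in $\overline{\Gamma_k^+}$ follows from the comparison principle for $\sigma_k$. The main obstacle I anticipate is not the soft functional-analytic wrapper but the precise bookkeeping of the boundary term: one must integrate by parts twice in the complex setting, keep track of the Hermitian geometry of $M$ (the Levi form, the Hermitian second fundamental form, and the distinction between the complex normal and the real normal $\nu$), and verify that the resulting boundary operator $\mathcal{B}_k(u)$ is intrinsic to $M$ and independent of the chosen extension — i.e. that it is a genuine Dirichlet-to-Neumann-type operator. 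Getting the divergence-free property of $T_{k-1}(D^{1,1}u)$ and the exact form of the boundary integrand correct, so that the first variation is clean and the convexity estimate closes, is the step that will require the most care.
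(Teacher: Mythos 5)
Your overall architecture matches the paper's --- a boundary-corrected functional, a first-variation computation identifying $u_f$ as a critical point, convexity on the cone via positivity of the Newton tensor, and a $C^{1,1}$-density argument for the closure --- and you correctly flag the divergence-free property of $T_{k-1}(D^{1,1}u)$ and the role of the Hermitian second fundamental form. But there is a genuine gap at the center: the mechanism that determines the boundary operator and makes the variational formulas usable. You compute the first variation only for $\varphi\in C^\infty_c(X)$, for which the boundary term is trivially inert; the minimization argument, however, requires the first variation at $u_f$ and the second variation along the segment $tu+(1-t)u_f$ in the direction $v=u-u_f$, which vanishes on $M$ but has nonvanishing normal derivative there. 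For such $v$ the derivative of your boundary term $\oint_M f\,\mathcal{B}_k(u+tv)\,d\mu$ does \emph{not} vanish, since $\mathcal{B}_k$ involves $v_\nu$ and second tangential derivatives of $v$, and nothing in your sketch cancels it. The paper's resolution is exactly Proposition \ref{main prop}: the boundary operator $B_k$ is constructed (by an inductive symmetrization, not by a single contraction of $T_{k-1}$ with $L$) so that $(u,w^1,\cdots,w^k)\mapsto -\int_X u\,\sigma_k(D^{1,1}w^1,\cdots,D^{1,1}w^k)dx+\oint_M uB_k(w^1,\cdots,w^k)d\mu$ is a \emph{symmetric} $(k+1)$-linear form; symmetry lets one move the variation into the undifferentiated slot, so the boundary contribution dies whenever $v\rvert_M=0$ and both variation formulas reduce to pure interior integrals. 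The resulting operator is a specific combinatorial sum $\sum_i c_{k,i}\,u_\nu^{i-1}\sigma_{k-1}(L\rvert_{\mathcal{H}},\cdots,L\rvert_{\mathcal{H}},D^{1,1}u\rvert_{\mathcal{H}},\cdots)$ as in \eqref{Qk}, not the single contraction you propose; without the symmetry criterion there is no way to pin it down, and your direct ``G{\aa}rding plus Cauchy--Schwarz'' route to $\mathcal{E}_{k+1}(u)-\mathcal{E}_{k+1}(u_f)\geq 0$ leaves the boundary difference $\oint_M f\bigl(\mathcal{B}_k(u)-\mathcal{B}_k(u_f)\bigr)d\mu$ completely uncontrolled.

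A secondary but real issue is the sign of your interior term. With $+\frac{1}{k+1}\int_X u\,\sigma_k(D^{1,1}u)\,dV$, integrating by parts with $v\rvert_M=0$ gives a second variation proportional to $-\int_X v_iv_{\bar{j}}T_{k-1}(D^{1,1}u)_{j\bar{i}}\,dx\leq 0$ on $\Gamma_k^+$, i.e.\ concavity, so you would be proving the reverse inequality. The paper takes $-\int_X u\,\sigma_k(D^{1,1}u)\,dx$ plus boundary terms, yielding the second variation $+(k+1)\int_X v_iv_{\bar{j}}T_{k-1}(D^{1,1}u)_{j\bar{i}}\,dx\geq 0$ and hence the convexity needed for \eqref{eqn:k-sobolev-trace}.
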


The way we prove the existence of $\mathcal{E}_k$ is by explicitly constructing it using an induction argument. For any $1\leq k\leq n$, let $\sigma_{k}(\overbrace{\cdot, \cdots, \cdot}^k)$ be the polarization of the $k$-linear map $u\rightarrow \sigma_k(D^{1,1}u)$ and let $L\rvert_{\mathcal{H}}: T^{1,0}M\times T^{0,1}M\rightarrow \mathbb{C}$ be the restriction of the second fundamental form $L$ to $T^{1,0}M\times T^{0,1}M$. Then we 
prove that the functional defined below satisfies the Dirichlet principle \eqref{eqn:k-sobolev-trace}:
\begin{equation}\label{Ek}
\mathcal{E}_{k+1}(u):=-\int_X u\sigma_k(D^{1,1}u)dx+\frac{1}{k^2(k+1)}\sum_{i=2}^{k+1}S_i(u),
\end{equation}
where each $S_i$ is a functional on $C^2(X)\cap C^2(M)\cap C^1(\oo{X})$ defined as
\begin{equation}\label{Si def}
	S_i(u):=(-1)^i\frac{k(k+1)}{2}\binom{k}{i-1}\oint_{M}u u_{\nu}^{i-1}\sigma_{k-1}(\overbrace{L\rvert_{\mathcal{H}},\cdots,L\rvert_{\mathcal{H}}}^{i-2}, \overbrace{D^{1,1}u\rvert_{\mathcal{H}}, \cdots, D^{1,1} u}^{k+1-i}\rvert_{\mathcal{H}}).
\end{equation}
In the above formula, $D^{1,1}u\rvert_{\mathcal{H}}: T^{1,0}M\times T^{0,1}M\rightarrow \mathbb{C}$ is the restriction of the complex Hessian $D^{1,1}u$ to $T^{1,0}M\times T^{0,1}M$.

In particular, when $k=1$, we obtain $\mathcal{E}_2(u)=\frac{1}{2}\oint_{M}u u_{\nu}d\mu$, which recovers \eqref{model trace}. When $k=2$, we can further simplify the functional $\mathcal{E}_3(u)$ into:
\begin{align}
\mathcal{E}_3(u)
=-\int_{X}u\sigma_2(D^{1,1}u)
+\frac{1}{2}\oint_{M} uu_{\nu}\Delta_bu
+\frac{1}{8}\oint_{M}uu_{\nu}^2H_b
-\frac{\sqrt{-1}}{2}\oint_{M}uu_{\nu}L\bigl(\oo{\nab}^{1,0}u-\oo{\nab}^{0,1}u,T\bigr),
\end{align}
where $\Delta_b$ is the sub-Laplacian on $M$ and $H_b$ is the Hermitian mean curvature of $M$ introduced in \cite{Wang}.  Here $\oo{\nab}u$ denotes the gradient vector field of $u$ on $M$. $\oo{\nab}^{1,0}u$ and $\oo{\nab}^{0,1}u$ are the projection of $\oo{\nab}u$ onto $T^{1,0}M$ and $T^{0,1}M$ respectively. Let $\nu$ be the unit outward normal vector along $M$ and $T:=J\nu$ where $J$ is the complex structure on $\mathbb{C}^n$. 

The detailed argument to construct $\mathcal{E}_{k+1}$ can be found in Section \ref{sec: dimension k}. It is deduced from the following technical result.

\begin{prop}
	\label{main prop}
	Fix $k\in \mathbb{N}$ and let $X\subset\mathbb{C}^n$ be a bounded smooth domain with boundary $M=\partial X$. Then there is a multilinear differential operator
	\begin{equation}
	\label{boundary requirement}
	B_k\colon C^{\infty}(\overline{X})^k \to C^{\infty}(M)
	\end{equation}
	such that the multilinear form $L_{k+1}\colon C^{\infty}(\oo{X})^{k+1}\to\mathbb{R}$ defined by
	\begin{equation}
	\label{Lk}
	L_{k+1}(u,w^1,\cdots,w^k) := -\int_X u\,\sigma_k(D^{1,1}w^1,\cdots,D^{1,1}w^k)dx + \oint_{M} u\,B_k(w^1,\cdots,w^k)d\mu
	\end{equation}
	is symmetric, where $\sigma_k(D^{1,1}w^1,\cdots,D^{1,1}w^k)$ is the polarization of the $k$-linear map $w\mapsto\sigma_k(D^{1,1}w)$.
\end{prop}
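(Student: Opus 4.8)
The plan is to construct $B_k$ by induction on $k$, using integration by parts to transfer derivatives off of $u$ and symmetrize. The $k=1$ case is the classical Green's identity: $-\int_X u\,\Delta w\,dx = -\int_X \langle \nabla u,\nabla w\rangle\,dx + \oint_M u\,w_\nu\,d\mu$, and since $\int_X \langle\nabla u,\nabla w\rangle\,dx$ is manifestly symmetric in $u,w$, we may take $B_1(w) = w_\nu$ (up to the normalization built into $\sigma_1 = \Delta$ conventions). For the inductive step, I would use the divergence-structure (Newton-tensor) identity for $\sigma_k$: writing $\sigma_k(D^{1,1}w^1,\dots,D^{1,1}w^k)$ via the polarized Newton tensor $T_{k-1}^{i\bar j}(w^2,\dots,w^k)$, one has the pointwise identity $\sigma_k(D^{1,1}w^1,\dots,D^{1,1}w^k) = \tfrac1k\,T_{k-1}^{i\bar j}(w^2,\dots,w^k)\,\partial_i\partial_{\bar j}w^1$ together with the divergence-free property $\partial_i T_{k-1}^{i\bar j} = 0$ (the complex analogue of the classical fact that Newton tensors of Hessians are divergence free — this is where the assumption that the entries are genuine complex Hessians $D^{1,1}w^\ell$, not arbitrary Hermitian forms, is used).

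Concretely, I would integrate by parts twice in the bulk integral $-\int_X u\,\sigma_k(D^{1,1}w^1,\dots,D^{1,1}w^k)\,dx$. Moving one $\bar\partial$ off $w^1$ produces a bulk term $\tfrac1k\int_X T_{k-1}^{i\bar j}\,\partial_i u\,\partial_{\bar j}w^1\,dx$ plus a boundary term; moving the remaining $\partial$ off $w^1$ (using $\partial_i T_{k-1}^{i\bar j}=0$) produces $-\tfrac1k\int_X T_{k-1}^{i\bar j}\,\partial_i\partial_{\bar j}u\,w^1\,dx$ — which is $-\int_X w^1\,\sigma_k(D^{1,1}u,D^{1,1}w^2,\dots,D^{1,1}w^k)\,dx$ — plus more boundary terms. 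The bulk term is now in a form where $u$ and $w^1$ have been swapped, so iterating this over all the slots and averaging will yield a fully symmetric bulk expression; the accumulated boundary integrals define a candidate for $B_k(w^1,\dots,w^k)$ against $u$, but only after we check that those boundary terms can be written as $\oint_M u\cdot(\text{something})\,d\mu$ — i.e. that no normal derivatives of $u$ survive, or that the ones that do survive can be re-expressed using tangential operators plus the inductive hypothesis applied on $M$. This last point is where the second fundamental form $L|_{\mathcal H}$ and the tangential Hessian $D^{1,1}u|_{\mathcal H}$ in \eqref{Si def} enter: decomposing $D^{1,1}w^\ell$ on $\partial X$ into tangential, mixed, and normal parts via the Gauss–Codazzi-type formulas relates the normal components of the Hessian to $u_\nu$ times curvature terms, so that $B_k$ ends up built from $w^\ell_\nu$, tangential Hessians, and the Levi/second-fundamental form.

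For the symmetry, rather than tracking every boundary term explicitly it is cleaner to argue as follows: define $L_{k+1}$ by \eqref{Lk} with $B_k$ the (yet unknown) boundary operator, and observe that $L_{k+1}(u,w^1,\dots,w^k) - L_{k+1}(w^1,u,w^2,\dots,w^k)$ equals the difference of a bulk term (which vanishes by the double integration by parts and $\partial_i T_{k-1}^{i\bar j}=0$, as above) and a pure boundary term; so requiring symmetry in the first two slots is equivalent to a condition expressible entirely on $M$, and one solves for $B_k$ there using the induction hypothesis for $\sigma_{k-1}$ restricted to the $2n-1$-dimensional boundary together with the trace-extension structure. Symmetry among the remaining slots $w^1,\dots,w^k$ is automatic since $\sigma_k(D^{1,1}w^1,\dots,D^{1,1}w^k)$ is already symmetric in those and $B_k$ can be chosen symmetric in its arguments.

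The main obstacle I anticipate is the bookkeeping in the inductive step: after the double integration by parts, the boundary terms involve $\bar\partial$-derivatives of $w^1$ along $M$ and contractions of the tangential Newton tensor, and showing these assemble into an expression of the precise shape $\oint_M u\,B_k\,d\mu$ with $B_k$ given by a closed formula (ultimately yielding the $S_i$ in \eqref{Si def}) requires careful use of the splitting $T^{1,0}\mathbb C^n|_M = \mathcal H \oplus \mathbb C\cdot(\nu - iJ\nu)$ and the corresponding block decomposition of $D^{1,1}w$, including the off-diagonal (mixed normal–tangential) blocks that produce the $L(\overline\nabla^{1,0}u - \overline\nabla^{0,1}u, T)$-type term visible in the $k=2$ formula. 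Getting the combinatorial constants (the binomial coefficients and the $\tfrac{1}{k^2(k+1)}$ in \eqref{Ek}) to come out right is the routine-but-delicate part; the conceptual content is entirely in the divergence-free Newton tensor identity and the boundary decomposition.
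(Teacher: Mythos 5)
Your proposal follows essentially the same route as the paper: make the bulk term symmetric via the divergence-free polarized Newton tensor and a double integration by parts (the paper starts from the manifestly symmetric form $S_0$, which is exactly your ``iterate over all slots and average''), then absorb the boundary remainder by decomposing $D^{1,1}w\rvert_{\mathcal{H}}=\oo{D}^{1,1}w+w_{\nu}L\rvert_{\mathcal{H}}$, integrating by parts tangentially along $M$, and repeatedly symmetrizing the terms that still carry $u_{\nu}$, which terminates once all slots of $\sigma_{k-1}$ are filled by $L\rvert_{\mathcal{H}}$ and yields the $S_i$ of \eqref{Si def}. The one caveat is that the paper's induction is this finite iteration for fixed $k$ over the number of normal derivatives in the boundary remainder, not an induction on $k$ invoking the proposition for $\sigma_{k-1}$ on the real hypersurface $M$ (which would not literally apply there); this does not affect the substance of your argument.
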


Let us explain briefly why Theorem \ref{thm:main_thm} follows from Proposition \ref{main prop}.
The energy functional in Theorem \ref{thm:main_thm} is actually defined as $\mathcal{E}_{k+1}(u):=L_{k+1}(u,\cdots,u)$. The fact that \eqref{Lk} defines a symmetric $(k+1)$-linear form implies that if $v\in C^\infty(\overline{X})$ is such that $v\rvert_M=0$, then
\begin{equation*}
	\frac{d^j}{dt^j}\Big\vert_{t=0}\mathcal{E}_k(u+tv) = -\frac{(k+1)!}{(k+1-j)!}\int_X v\,\sigma_k\bigl(\overbrace{D^{1,1}v,\cdots,D^{1,1}v}^{j-1},\overbrace{D^{1,1}u,\cdots,D^{1,1}u}^{k+1-j}\bigr) dx,
\end{equation*}
for all $1\leq j\leq k+1$.  That is, within the class $\mathcal{C}_f$, the derivatives of the energies $\mathcal{E}_k$ depend only on the interior integrals.  In particular, it is straightforward to identify the critical points of $\mathcal{E}_k$ and deduce the convexity of $\mathcal{E}_k$ within the positive cone $\Gamma_k^+$, from which Theorem \ref{thm:main_thm} follows readily.

Given $u\in C^2(X)\cap C^2(M)\cap C^1(\oo{X})$ and $k\in \mathbb{N}$, we set 
\begin{equation}\label{Qk}
	Q_k(u):=\frac{1}{2k}\sum_{i=2}^{k+1}(-1)^{i}\binom{k}{i-1}u_{\nu}^{i-1}\sigma_{k-1}(\overbrace{L\rvert_{\mathcal{H}},\cdots,L\rvert_{\mathcal{H}}}^{i-2}, \overbrace{D^{1,1}u\rvert_{\mathcal{H}}, \cdots, D^{1,1} u\rvert_{\mathcal{H}}}^{k+1-i}).
\end{equation}
\eqref{Ek} and \eqref{Si def} imply that
\begin{equation*}
	\mathcal{E}_{k+1}(u)=-\int_X u\sigma_k({D^{1,1}u})dx+\oint_{M} uQ_k(u)d\mu.
\end{equation*} 
Given $f\in C^\infty(M)$ and $k\in\mathbb{N}$, define
\begin{equation*}
\mathcal{Q}_k(f) := Q_k(u_f)
\end{equation*}
for $u_f$ the solution to \eqref{eqn:degenerate_dirichlet_problem}. It follows from \cite{Kr87, Kr89a, Kr89b, Kr90, Zhou13} that $\mathcal{Q}_k$ is well-defined; it should be regarded as a fully nonlinear analogue of the Dirichlet-to-Neumann map $f\rightarrow (u_f)_{\nu}$. In terms of this operator, Theorem \ref{thm:main_thm} states that
\begin{equation}\label{eqn:trace_inequality}
	\mathcal{E}_k(u)\geq \oint_{M} f\mathcal{Q}_k(f)d\mu
\end{equation}
for all $u\in \oo{C_{f,k}}$, with equality if and only if $u=u_f$. Equation \eqref{eqn:trace_inequality} gives a trace inequality which can be regarded as a norm computation for part of the trace embedding $W^{\frac{2k}{k+1},k+1}(X)\subset W^{\frac{2k-1}{k+1},k+1}(M)$.

\begin{rmk}
We conclude this introduction with a few additional comments on the boundary operators $B_k$ of Proposition \ref{main prop}. The conditions of Proposition \ref{main prop} do not uniquely determine the boundary operators $B_k$ of Proposition \ref{main prop}. So neither the conditions in Theorem \ref{thm:main_thm} determine the functional $\mathcal{E}_{k+1}$ uniquely. Indeed, the operators are not unique even if we require additionally that the operators $B_k$ commute with diffeomorphisms, as do the operators constructed in the proof of Proposition \ref{main prop}.  A trivial source of nonuniqueness comes from the freedom to add symmetric zeroth-order terms to $B_k$.  For example, if $B_k$ satisfies the conclusions of Proposition \ref{main prop}, so does the operator
\begin{equation*}
	(w^1,\cdots,w^k) \mapsto B_k(w^1,\cdots,w^k) + cH_bw^1\dotsm w^k
\end{equation*}
for any $c\in\mathbb{R}$. More generally, one may add to the boundary operators $B_k$ any symmetric multilinear operator which is also symmetric upon pairing with integration.  For example, consider the operator $D\colon\left(C^2(\overline{X})\right)^2\to C^\infty(M)$ defined by
\begin{equation*}
	D(v,w) = \oo{\delta}\left( L(\oo{\nabla}(vw))\right) - L(\oo{\nabla} v,\oo{\nabla} w),
\end{equation*}
where $\oo{\nabla}$ is the Levi-Civita connection on $M$ and $\oo{\delta}$ is the divergence operator. It is readily verified that $(u,v,w)\mapsto\oint u\,D(v,w)d\mu$ is a symmetric trilinear form, and thus $D$ can be added to the operator $B_2$ to yield another operator $\tilde B_2$ which satisfies the conclusions of Proposition \ref{main prop}.
\end{rmk}

This article is organized as follows. In Section \ref{sec:bg} we collect some useful facts involving the complex $k$-Hessian and the CR structure on $M$. In Section \ref{sec: dimension 2} we shall first prove Proposition \ref{main prop} for $k=2$, since things are much simpler in this case, yet it still provides the essential insights to this problem. In Section \ref{sec: dimension k} we prove Proposition \ref{main prop} for any $k$ by explicitly constructing a suitable boundary operator. In Section \ref{sec:proof of main theorem} we prove Theorem \ref{thm:main_thm}.

\section{Preliminaries}
\label{sec:bg}
In this note, we will use the Greek letters $\alpha, \beta, \gamma, \cdots$ to denote indices ranging between $1$ and $n-1$ and use the Roman letters $i, j, k, \cdots$ to denote the indices ranging between $1$ and $n$. In order to avoid tedium, we will always adopt the Einstein summation convention. Let us review some background materials.
 
\subsection{The CR structure and the Kohn Laplacian}
In this section, we give a brief review of the CR structure and the Kohn Lalacian on the real hypersurface in $\mathbb{C}^n$. For more details, we refer the readers to \cite{Boggess,Wang}. 

Let $X\subset \mathbb{C}^n$ be a bounded domain with smooth boundary $M=\partial X$. The boundary $M$ has the induced metric from $\mathbb{C}^n$ and its unit outward normal vector is denoted by $\nu$. Then $T:=J\nu$ is a unit tangent vector field along $M$, where $J$ is the complex structure on $\mathbb{C}^n$. 

We denote by $(\cdot, \cdot)$ the standard Euclidean metric on $\mathbb{C}^n$. The distribution $\mathcal{H}=\{Y\in TM: \left(Y,T\right)=0\}$ is invariant under the complex structure $J$. Therefore, we can decompose $\mathcal{H}\otimes\mathbb{C}$ into the direct sum of the $\sqrt{-1}$ and $-\sqrt{-1}$ eigenspaces of $J$, which are denoted by $T^{1,0}M$ and $T^{0,1}M$ respectively. We have
\begin{align*}
T^{1,0}M=\{Y-\sqrt{-1}JY: Y\in \mathcal{H}\}, \quad T^{0,1}M=\oo{T^{1,0}M}.
\end{align*}

In doing computations, we will use a local unitary frame $\{Z_i: 1\leq i\leq n\}$ for $T^{1,0}\mathbb{C}^n$ and its dual frame $\{\theta^i: 1\leq i\leq n\}$. Let $\nab$ and $\oo{\nab}$ be the Levi-Civita connection of $\mathbb{C}^n$ and $M$ respectively. For any $u\in C^{\infty}(\oo{X})$, we write the covariant derivatives as
\begin{align*}
u_i=Z_iu, \quad u_{\bar{i}}=\oo{Z_i}u, \quad u_{i\bar{j}}=\nabla^2u\left(\oo{Z_j},Z_i\right)=\oo{Z_j}Z_iu-\nab_{\oo{Z_j}}Z_iu, \quad \mbox{etc}.
\end{align*}
As $\mathbb{C}^n$ is flat, it follows immediately that
\begin{align}\label{commutative derivatives}
u_{i\bar{j}}=u_{\bar{j}i},\quad u_{i\bar{j}\bar{k}}=u_{i\bar{k}\bar{j}}, \quad u_{i\bar{j}k}=u_{ik\bar{j}}.
\end{align}
Set
\begin{equation*}
	\oo{\nab}u:=u_{\alpha}\oo{Z_{\alpha}}+u_{\bar{\alpha}}Z_{\alpha}+(Tu)T,
\end{equation*} 
which is the gradient vector field of $u$ on $M$. We denote its holomorphic part and antiholomorphic part respectively by
\begin{equation}
	\oo{\nab}^{1,0}u:=u_{\bar{\alpha}}Z_{\alpha}, \qquad \oo{\nab}^{0,1}u:=u_{\alpha}\oo{Z_{\alpha}}. 
\end{equation}
In other word, $\oo{\nab}^{1,0}u$ and $\oo{\nab}^{0,1}u$ are respectively the projection of $\oo{\nab}u$ to $T^{1,0}M$ and $T^{0,1}M$.
And we use $\Delta$ to denote the complex Laplacian, i.e., $\Delta u=u_{i\bar{i}}$, which is half of the real Laplacian.

Along $M$, we can assume that $Z_n=\frac{1}{\sqrt{2}}\left(\nu-\sqrt{-1}T\right)$ and $\{Z_{\alpha}: 1\leq \alpha\leq n-1\}$ is a local unitary frame of $T^{1,0}M$. If we denote $X_i=\sqrt{2}\Real Z_i$ and $Y_i=-\sqrt{2}\Ima Z_i$, then $\{X_i, Y_i: 1\leq i, j\leq n\}$ is an orthonormal basis of $T\mathbb{C}^n$. In particular, $X_n=\nu$ and $Y_n=T$. The shape operator $A: TM\rightarrow TM$ and the second fundamental form are defined in the usual way: for $X, Y\in TM$, 
\begin{align*}
AX=\nab_X\nu, \quad L \left(X, Y\right)=\left(AX, Y\right).
\end{align*}
The Hermitian mean curvature $H_b$ introduced in \cite{Wang} is defined by
\begin{equation*}
H_b=H-L \left(J\nu, J\nu\right),
\end{equation*}
where $H$ is the mean curvature.

Let $\dbar_b$ be the tangential Cauchy-Riemann operator on $M$. In terms of the local frame,
\begin{align*}
\dbar_bu=u_{\bar{\alpha}}\oo{\theta^{\alpha}}.
\end{align*}
Let $\dbar_b^*$ be the adjoint of $\dbar_b$ and let $\Box_b=-\dbar_b^*\dbar_b$ be the Kohn Laplacian. The Kohn Laplacian on functions is in general not a real operator and its real part, denoted by $\Delta_b=\Real\Box_b$, is usually called the sub-Laplacian.

\subsection{The $\Gamma_k^+$-cone}
In this subsection, we describe some properties of the elementary symmetric functions and their associated convex cones.

\begin{defn} The $k$-th elementary symmetric function for
	$\lambda=(\lambda_1,\cdots,\lambda_n)\in \mathbb{R}^n$ is
	\begin{equation*}
		\sigma_{k}(\lambda):=\sum_{i_1<\cdots<i_k}\lambda_{i_1}\cdots \lambda_{i_k}.
	\end{equation*}
\end{defn}

The elementary symmetric functions are special cases of hyperbolic polynomials \cite{Garding}.  As such, they enjoy many nice properties in their associated positive cones.

\begin{defn}\label{cone}
	The positive $k$-cone is the connected component of $\left\{\lambda : \sigma_k(\lambda)>0\right\}$ which contains $(1,\cdots,1)$.  Equivalently,
	\begin{equation*}
		\Gamma_k^+=\left\{\lambda\in \mathbb{R}^n:  \sigma_{1}(\lambda)>0, \cdots, \sigma_{k}(\lambda)>0\right\}.
	\end{equation*}
\end{defn}

For example, the positive $n$-cone is
\begin{equation*}
	\Gamma_n^+ = \left\{ \lambda\in\mathbb{R}^n :  \lambda_1,\cdots,\lambda_n>0 \right\},
\end{equation*}
and the positive $1$-cone is the half-space
\begin{equation*}
	\Gamma_1^+ = \left\{ \lambda\in\mathbb{R}^n :  \lambda_1+\cdots+\lambda_n>0 \right\}.
\end{equation*}
Note that $\Gamma_k^+$ is an open convex cone and satisfies that
\begin{equation*}
	\Gamma_{n}^+ \subset \Gamma_{n-1}^+\subset \cdots \subset\Gamma_{1}^+.
\end{equation*}

Applying G{\aa}rding's theory of hyperbolic polynomials \cite{Garding}, one concludes that
$\sigma_{k}^{\frac{1}{k}}$ is a concave function in $\Gamma_{k}^{+}$.

Let $A$ be a Hermitian matrix, i.e., $\oo{A}^{\intercal}=A$. Then all its eigenvalues are real and we can similarly define a positive $k$-cone of the Hermitian matrices. 
\begin{defn}
	A Hermitian matrix $A$ is in the $\tilde{\Gamma}_{k}^+$ cone if its 
	eigenvalues satisfy that
	\begin{equation*}
		\lambda(A):=(\lambda_1(A),\cdots,\lambda_n(A))\in  \Gamma_{k}^+.
	\end{equation*}
\end{defn}
Suppose $f$ is a function on $\Gamma_{k}^+$. Denote by $F=f(\lambda(A))$ the function on $\tilde{\Gamma}_{k}^+$ induced by $f$.  It is known in \cite{Caff4} that if $f$ is concave in $\Gamma_{k}^{+}$, then the induced function $F$ is concave in $\tilde{\Gamma}_{k}^{+}$.  For this reason, we shall denote $\tilde{\Gamma}_k^+$ by $\Gamma_k^+$ and $\sigma_k(\lambda(A))$ by $\sigma_k(A)$ when there is no possibility of confusion.

For an $n\times n$ Hermitian matrix $A$, let $A_{i\bar{j}}$ be its $(i, j)$ entry. An equivalent definition of $\sigma_k(A)$ is
\begin{equation}
\sigma_{k}(A):=\frac{1}{k!}\delta^{i_1\cdots i_k}_{j_1\cdots j_k}A_{i_1\bar{j_1}} \cdots A_{i_k\bar{j_k}},
\end{equation}
where $\delta^{i_1\cdots i_k}_{j_1\cdots j_k} $ is the generalized Kronecker delta, that is to say, it is zero if $\{i_1,\cdots,i_k\}\neq\{j_1,\cdots,j_k\}$ and equals $1$ (resp.  $-1$) if $(i_1,\cdots,i_k)$ and $(j_1,\cdots,j_k)$ differ by an even (resp. odd) permutation. In particular, when $k=n$, 
\begin{equation}
\sigma_n(A)=\frac{1}{n!}\delta^{i_1\cdots i_n}_{j_1\cdots j_n}A_{i_1\bar{j_1}} \cdots A_{i_n\bar{j_n}}=\det A.
\end{equation}

The Newton transformation tensor is defined as
\begin{equation}\label{def: Newton transformation tensor}
	T_{k}(A)_{j\bar{i}}:=\frac{1}{k!}
	\delta^{ii_1\cdots i_k}_{jj_1\cdots j_k}{A}_{i_1\bar{j_1}} \cdots {A}_{i_k\bar{j_k}}.
\end{equation}
In fact, it is the linearized operator of $\sigma_{k+1}$:
\begin{equation}
	T_k(A)_{j\bar{i}}=\frac{\partial\sigma_{k+1}(A)}{\partial A_{i\bar{j}}}.
\end{equation}

\begin{defn}\label{sigmak polarization}
	The polarization of $\sigma_k$ is defined by
	\begin{equation}
		\sigma_{k}(A_1,\cdots,A_k):=\frac{1}{k!}\delta^{i_1\cdots i_k}_{j_1\cdots j_k}{(A_1)}_{i_1\bar{j_1}} \cdots {(A_k)}_{i_k\bar{j_k}}.
	\end{equation}
\end{defn}

It is called the polarization of $\sigma_k$ because $\sigma_k(A_1,\cdots,A_k)$ is the symmetric multilinear form such that $\sigma_k(A)=\sigma_k(A,\cdots,A)$.

\begin{defn}\label{Tk polarization}
	The polarized Newton transformation tensor is
	\begin{equation}
		T_{k}(A_1,\cdots,A_k)_{j\bar{i}}:=\frac{1}{k!}\delta^{ii_1\cdots i_k}_{jj_1\cdots j_k}{(A_1)}_{i_1\bar{j_1}} \cdots {(A_k)}_{i_k\bar{j_k}}. 
	\end{equation}
\end{defn}

When some components in the polarizations are the same, we adopt the notational conventions
\begin{align*}
\sigma_{k}(\overbrace{B,\cdots,B}^{l},C,\cdots,C) & :=\sigma_{k}(\overbrace{B,\cdots,B}^{l},\overbrace{C,\cdots,C}^{k-l}), \\
T_{k}(\overbrace{B,\cdots,B}^{l},C,\cdots,C)_{i\bar{j}} & :=T_{k}(\overbrace{B,\cdots,B}^{l},\overbrace{C,\cdots,C}^{k-l})_{i\bar{j}}.
\end{align*}

Some useful relations between the Newton transformation tensor $T_{k}$ and $\sigma_k$ are as follows. 
\begin{align}\label{basic identies for elementary polynomials}
\sigma_{k}(A)=\frac{1}{n-k}T_{k}(A)_{i\bar{i}}=\frac{1}{k} A_{i\bar{j}}T_{k-1}(A)_{j\bar{i}}.
\end{align}

Many useful algebraic inequalities for elements of $\Gamma_{k}^+$ can be deduced from G{\aa}rding's theory of hyperbolic polynomials \cite{Garding}.  For us, the important such inequality is the fact that if $(A_1)_{i\bar{j}},\cdots, (A_k)_{i\bar{j}}\in \Gamma_{k+1}^{+}$, then $T_k(A_1,\dots,A_k)_{i\bar{j}}$ is a nonnegative Hermitian matrix. 

For any $u\in C^{\infty}(\oo{X})$, let $D^{1,1}u$ be its complex Hessian matrix, i.e., $D^{1,1}u=(u_{i\bar{j}})_{1\leq i,j\leq n}$. For simplicity, we denote by $T_k(u)_{i\bar{j}}$ the Newton transformation tensor $T_k(D^{1,1}u)_{i\bar{j}}$. We will use its divergence free property in later sections.
\begin{prop}
	Given $k\in \mathbb{N}$ and $u\in C^{\infty}(\oo{X})$, the Newton transformation tensor $T_k(D^{1,1}u)_{i\bar{j}}$ satisfies
	\begin{equation}\label{T divergence free}
	T_k(u)_{i\bar{j}\bar{i}}:=\nab_{\bar{i}}T_k(u)_{i\bar{j}}=0,\quad \mbox{and } \quad  T_k(u)_{i\bar{j}j}:=\nab_{j}T_k(u)_{i\bar{j}}=0.
	\end{equation}
\end{prop}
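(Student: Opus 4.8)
The plan is to prove both divergence identities by a direct computation from the explicit formula \eqref{def: Newton transformation tensor}, using only two ingredients: the product rule, and the interplay between the antisymmetry of the generalized Kronecker delta within its upper block and within its lower block and the symmetry of the third covariant derivatives of $u$ recorded in \eqref{commutative derivatives} (which holds precisely because $\mathbb{C}^n$ is flat).

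First I would write $T_k(u)_{i\bar{j}} = \frac{1}{k!}\delta^{j i_1\cdots i_k}_{i j_1\cdots j_k}\, u_{i_1\bar{j}_1}\cdots u_{i_k\bar{j}_k}$, keeping track of the index conventions so that in $\nab_{\bar{i}}T_k(u)_{i\bar{j}}$ the differentiation index $\bar{i}$ is contracted against the lower index $i$ of the delta (and in $\nab_j T_k(u)_{i\bar{j}}$ against the upper index $j$). Applying $\nab_{\bar{i}}$ and using the product rule produces $k$ terms; relabelling the dummy pairs $(i_\ell,j_\ell)$ and using that a simultaneous transposition of two upper indices together with the two corresponding lower indices leaves $\delta^{\cdots}_{\cdots}$ invariant shows that all $k$ terms are equal, so
\begin{equation*}
\nab_{\bar{i}}T_k(u)_{i\bar{j}} = \frac{1}{(k-1)!}\,\delta^{j i_1\cdots i_k}_{i j_1\cdots j_k}\, u_{i_1\bar{j}_1\bar{i}}\, u_{i_2\bar{j}_2}\cdots u_{i_k\bar{j}_k}.
\end{equation*}

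Now comes the key cancellation. By \eqref{commutative derivatives}, $u_{i_1\bar{j}_1\bar{i}} = u_{i_1\bar{i}\bar{j}_1}$, so this factor is symmetric under exchanging the antiholomorphic indices $\bar{i}$ and $\bar{j}_1$; but $i$ and $j_1$ both sit in the lower block of $\delta^{j i_1\cdots i_k}_{i j_1\cdots j_k}$, where the Kronecker delta is antisymmetric. Contracting a quantity that is symmetric in a pair of indices against one that is antisymmetric in the same pair yields zero, which gives $T_k(u)_{i\bar{j}\bar{i}}=0$. The second identity is proved in exactly the same way: the analogous manipulation leaves $\delta^{j i_1\cdots i_k}_{i j_1\cdots j_k}\, u_{i_1\bar{j}_1 j}\, u_{i_2\bar{j}_2}\cdots u_{i_k\bar{j}_k}$, and by \eqref{commutative derivatives} together with the torsion-freeness of the Levi-Civita connection ($u_{i_1 j}=u_{j i_1}$) one has $u_{i_1\bar{j}_1 j}=u_{i_1 j\bar{j}_1}=u_{j i_1\bar{j}_1}$, which is symmetric in the holomorphic indices $i_1$ and $j$; these two indices now lie in the upper block of $\delta$, so the contraction again vanishes.

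I do not expect a genuine obstacle here: the argument is short and self-contained once the index bookkeeping is set up, since the symmetry-versus-antisymmetry cancellation works only when the two commuted derivative indices land in the same block (both upper, or both lower) of the Kronecker delta. The one subtle point is therefore to align the conventions of \eqref{def: Newton transformation tensor} with the notation $T_k(u)_{i\bar{j}}$ used in the statement so that the derivative index falls in the correct block; and it is worth emphasizing that the flatness of $\mathbb{C}^n$ enters essentially through \eqref{commutative derivatives}, as a curved background would produce extra curvature terms.
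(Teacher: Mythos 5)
Your proposal is correct and follows essentially the same route as the paper: expand $\nab_{\bar i}T_k(u)_{i\bar j}$ by the product rule from \eqref{def: Newton transformation tensor}, commute the third derivatives via \eqref{commutative derivatives}, and kill each term by pairing the resulting symmetry in the two antiholomorphic (resp. holomorphic) indices against the skew-symmetry of the generalized Kronecker delta within its lower (resp. upper) block. The only cosmetic difference is that you collect the $k$ product-rule terms into a single one by relabeling, whereas the paper argues that each summand vanishes separately.
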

\begin{proof}
	By the definition of the Newton transformation tensor \eqref{def: Newton transformation tensor}, it follows readily
	\begin{align*}
		T_{k}(u)_{i\bar{j}\bar{i}}=\sum_{l=1}^k\frac{1}{k!}
		\delta^{jj_1\cdots j_k}_{ii_1\cdots i_k}u_{j_1\bar{i_1}} \cdots u_{j_l\bar{i_l}\bar{i}} \cdots u_{j_k\bar{i_k}}.
	\end{align*}
	Since $u_{j_l\bar{i_l}\bar{i}}=u_{j_l\bar{i}\bar{i_l}}$ by \eqref{commutative derivatives} and the Kronecker delta $\delta^{jj_1\cdots j_k}_{ii_1\cdots i_k}$ is skew-symmetric in the index $i$ and $i_l$, each term on the right-hand side is zero. Therefore, we have $T_k(u)_{i\bar{j}\bar{i}}=0$. The second identity follows in a similar way.
\end{proof}

\section{Proof of Proposition \ref{main prop} for $k=2$}\label{sec: dimension 2}

Set 
\begin{equation}\label{S0 k=2}
	S_0(u,v,w):=-\int_{X} u_iv_{\bar{j}}T_1(w)_{j\bar{i}}+u_{\bar{j}}v_iT_1(w)_{j\bar{i}}+v_iw_{\bar{j}}T_1(u)_{j\bar{i}}+v_{\bar{j}}w_iT_1(u)_{j\bar{i}}+w_iu_{\bar{j}}T_1(v)_{j\bar{i}}+w_{\bar{j}}u_{i}T_1(v)_{j\bar{i}}.
\end{equation}

We first recall the divergence theorem in terms of the unitary frame $\{Z_i: 1\leq i\leq n\}$ with $Z_n=\frac{1}{\sqrt{2}}\left(\nu-\sqrt{-1}T\right)$.
\begin{lemma}
	We have the following integral identities for tensors $a_i \theta^i$ or $a_{\bar{i}}\oo{\theta^i}$ on $\oo{X}$.
	\begin{align}\label{divergence theorem}
	\int_{X} \nab_{\bar{i}}a_i=\frac{1}{\sqrt{2}}\oint_{M} a_n, \quad \mbox{ and } \quad 
	\int_{X} \nab_i a_{\bar{i}}=\frac{1}{\sqrt{2}}\oint_{M} a_{\bar{n}}.
	\end{align}
\end{lemma}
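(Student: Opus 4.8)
The plan is to recognize $\int_X \nab_{\bar i}a_i\,dx$ as the integral of an honest Euclidean divergence and then apply the classical divergence theorem on $\oo X\subset\mathbb R^{2n}$.

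First, note that the claimed identities are independent of the choice of local unitary frame, so it suffices to verify the first one using the standard constant frame $Z_j=\sqrt2\,\partial/\partial z^j$, for which $\theta^j=\tfrac1{\sqrt2}dz^j$ and $\nab Z_j=\nab\theta^j=0$; in this frame $\nab_{\bar i}a_i=\oo{Z_i}a_i=\sqrt2\sum_i\partial_{\bar z^i}a_i$. Introduce the complex-valued vector field $V:=a_i\oo{Z_i}$ on $\oo X$, i.e. the field metrically dual to the $(1,0)$-form $a_i\theta^i$. Using $\partial_{\bar z^i}=\tfrac12(\partial_{x^i}+\sqrt{-1}\,\partial_{y^i})$ one computes directly that $\divr V=\sqrt2\sum_i\partial_{\bar z^i}a_i=\nab_{\bar i}a_i$, where $\divr$ is the Euclidean divergence extended $\mathbb C$-linearly to complex vector fields. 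One could instead argue frame-independently: from metric compatibility of $\nab$ together with $\nab J=0$ on flat $\mathbb C^n$, which makes $T^{1,0}M$ and $T^{0,1}M$ parallel, one checks $\divr V=\sum_i\bigl[(\nab_{Z_i}V,\oo{Z_i})+(\nab_{\oo{Z_i}}V,Z_i)\bigr]=\nab_{\bar i}a_i$.

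Next, apply the divergence theorem $\int_X\divr V\,dx=\oint_M (V,\nu)\,d\mu$ (valid component-by-component for complex $V$, with $\nu$ the outward unit normal) and evaluate $(V,\nu)$ along $M$. Since $Z_n=\tfrac1{\sqrt2}(\nu-\sqrt{-1}\,T)$ there, we have $\oo{Z_n}=\tfrac1{\sqrt2}(\nu+\sqrt{-1}\,T)$, hence $(\oo{Z_n},\nu)=\tfrac1{\sqrt2}$ because $(T,\nu)=0$; and $(\oo{Z_\alpha},\nu)=0$ for $\alpha<n$ since $Z_\alpha,\oo{Z_\alpha}\in TM\otimes\mathbb C$. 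Therefore $(V,\nu)=a_i(\oo{Z_i},\nu)=\tfrac1{\sqrt2}a_n$, which yields $\int_X\nab_{\bar i}a_i\,dx=\tfrac1{\sqrt2}\oint_M a_n\,d\mu$. The second identity follows either by running the identical argument with $Z_i$, $\partial_{z^i}$ replacing $\oo{Z_i}$, $\partial_{\bar z^i}$, or by applying the first identity to $b_i:=\oo{a_{\bar i}}$ and taking complex conjugates.

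There is no serious obstacle here; the only thing to watch is the bookkeeping of the $\sqrt2$-normalizations built into the conventions ($Z_n=\tfrac1{\sqrt2}(\nu-\sqrt{-1}T)$, $Z_j=\sqrt2\,\partial_{z^j}$, $\theta^j=\tfrac1{\sqrt2}dz^j$, $\Delta=\tfrac12\Delta_{\mathbb R}$). Once these are pinned down, the boundary factor $\tfrac1{\sqrt2}$ drops out unambiguously and nothing beyond the classical divergence theorem is required.
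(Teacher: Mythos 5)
Your proof is correct and follows essentially the same route as the paper: identify $\nab_{\bar i}a_i$ as the divergence of the vector field dual to $a_i\theta^i$, apply the classical divergence theorem, and evaluate the boundary term via $Z_n=\tfrac{1}{\sqrt2}(\nu-\sqrt{-1}T)$ to produce the factor $\tfrac{1}{\sqrt2}$. The only difference is that you verify the identity $\divr V=\nab_{\bar i}a_i$ explicitly in a constant frame, whereas the paper simply asserts it.
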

\begin{proof}
	Note that
	\begin{align*}
		\text{div}\left(a_i\theta^i\right)=\nab_{\bar{i}}a_i.
	\end{align*}
	By the divergence theorem, we obtain
	\begin{align*}
	\int_{X} \nab_{\bar{i}}a_i
	=\int_{X}\text{div} \left(a_i\theta^i\right)
	=\oint_{M}a_i\theta^i(\nu)=\frac{1}{\sqrt{2}}a_n.
	\end{align*}
	The last equality follows from that $Z_n=\frac{1}{\sqrt{2}}(\nu-\sqrt{-1}T)$.
	
	Similarly, we also have
	\begin{align*}
	\int_{X} \nab_{i}a_{\bar{i}}
	=\int_{X}\text{div} \left(a_{\bar{i}}\oo{\theta^i}\right)
	=\oint_{M}a_{\bar{i}}\oo{\theta^i}(\nu)=\frac{1}{\sqrt{2}}a_{\bar{n}}.
	\end{align*}
\end{proof}

\begin{proof}[Proof of Proposition \ref{main prop} for $k=2$]
Note that $S_0$ is symmetric.  Our goal is to rewrite \eqref{S0 k=2} in the desired form \eqref{Lk}. By applying integration by parts to $S_0$ and using the divergence free property \eqref{T divergence free}, we have
\begin{align*}
	S_0=&\int_{X} 2u v_{i\bar{j}}T_1(w)_{j\bar{i}}+2v w_{i\bar{j}}T_1(u)_{j\bar{i}}+2u w_{i\bar{j}}T_1(v)_{j\bar{i}}\\
	&-\frac{1}{\sqrt{2}}\oint_{M} uv_i T_1(w)_{n\bar{i}}+uv_{\bar{j}}T_1(w)_{j\bar{n}}+vw_i T_1(u)_{n\bar{i}}+vw_{\bar{j}}T_1(u)_{j\bar{n}}+uw_i T_1(v)_{n\bar{i}}+uw_{\bar{j}} T_1(v)_{j\bar{n}}.
\end{align*}
Note that by \eqref{def: Newton transformation tensor},
\begin{equation*}
	w_{i\bar{j}} T_1(u)_{j\bar{i}}=\delta^{ii_1}_{jj_1}w_{i\bar{j}}u_{i_1\bar{j_1}}=\delta^{ii_1}_{jj_1}u_{i\bar{j}}w_{i_1\bar{j_1}}=u_{i\bar{j}} T_1(w)_{j\bar{i}}.
\end{equation*}
Thus,
\begin{align*}
	\int_{X}vw_{i\bar{j}} T_1(u)_{j\bar{i}}
	=&\int_{X} vu_{i\bar{j}} T_1(w)_{j\bar{i}}\\
	=&-\int_{X}v_{\bar{j}} u_i T_1(w)_{j\bar{i}}+\frac{1}{\sqrt{2}}\oint_{M} vu_i T_1(w)_{n\bar{i}}\\
	=&\int_{X}uv_{i\bar{j}}T_1(w)_{j\bar{i}}+\frac{1}{\sqrt{2}}\oint_{M}vu_iT_1(w)_{n\bar{i}}-uv_{\bar{j}} T_1(w)_{j\bar{n}}.
\end{align*}
Similarly,
\begin{align*}
\int_{X}vw_{i\bar{j}} T_1(u)_{j\bar{i}}
=&\int_{X} vu_{i\bar{j}} T_1(w)_{j\bar{i}}\\
=&-\int_{X}v_{i} u_{\bar{j}} T_1(w)_{j\bar{i}}+\frac{1}{\sqrt{2}}\oint_{M} vu_{\bar{j}} T_1(w)_{j\bar{n}}\\
=&\int_{X}uv_{i\bar{j}}T_1(w)_{j\bar{i}}+\frac{1}{\sqrt{2}}\oint_{M}vu_{\bar{j}}T_1(w)_{j\bar{n}}-uv_i T_1(w)_{n\bar{i}}.
\end{align*}
Combining these two identities,
\begin{align*}
	2\int_{X}vw_{i\bar{j}} T_1(u)_{j\bar{i}}
	=2\int_{X}uv_{i\bar{j}}T_1(w)_{j\bar{i}}+\frac{1}{\sqrt{2}}\oint_{M}vu_iT_1(w)_{n\bar{i}}-uv_{\bar{j}} T_1(w)_{j\bar{n}}+vu_{\bar{j}}T_1(w)_{j\bar{n}}-uv_i T_1(w)_{n\bar{i}}.
\end{align*}

Plugging this back into $S_0$, we have
\begin{align*}
S_0=&\int_{X} 6u v_{i\bar{j}}T_1(w)_{j\bar{i}}
+\frac{1}{\sqrt{2}}\oint_{M}vu_iT_1(w)_{n\bar{i}}-uv_{\bar{j}} T_1(w)_{j\bar{n}}+vu_{\bar{j}}T_1(w)_{j\bar{n}}-uv_i T_1(w)_{n\bar{i}}\\
&-\frac{1}{\sqrt{2}}\oint_{M} uv_i T_1(w)_{n\bar{i}}+uv_{\bar{j}}T_1(w)_{j\bar{n}}+vw_i T_1(u)_{n\bar{i}}+vw_{\bar{j}}T_1(u)_{j\bar{n}}+uw_i T_1(v)_{n\bar{i}}+uw_{\bar{j}} T_1(v)_{j\bar{n}}.
\end{align*}

Since $S_0(u,v,w)$ is symmetric in $v$ and $w$, we also have
\begin{align*}
S_0=&\int_{X} 6u v_{i\bar{j}}T_1(w)_{j\bar{i}}
+\frac{1}{\sqrt{2}}\oint_{M}wu_iT_1(v)_{n\bar{i}}-uw_{\bar{j}} T_1(v)_{j\bar{n}}+wu_{\bar{j}}T_1(v)_{j\bar{n}}-uw_i T_1(v)_{n\bar{i}}\\
&-\frac{1}{\sqrt{2}}\oint_{M} uw_i T_1(v)_{n\bar{i}}+uw_{\bar{j}}T_1(v)_{j\bar{n}}+wv_i T_1(u)_{n\bar{i}}+wv_{\bar{j}}T_1(u)_{j\bar{n}}+uv_i T_1(w)_{n\bar{i}}+uv_{\bar{j}} T_1(w)_{j\bar{n}}.
\end{align*}
For simplicity, we will use $uv_i$ and $uv_{\bar{j}}$  to denote $uv_iT_1(w)_{n\bar{i}}$ and $uv_{\bar{j}}T_1(w)_{j\bar{n}}$ respectively when there is no ambiguity. Combining these two expressions of $S_0$, we obtain
\begin{align*}
S_0=&6\int_{X}uv_{i\bar{j}}T_1(w)_{j\bar{i}}-\frac{3}{2\sqrt{2}}\oint_{M}u(v_i+w_i+v_{\bar{j}}+w_{\bar{j}})
\\
&-\frac{1}{2\sqrt{2}}\oint_{M}vw_i+vw_{\bar{j}}+wv_i+wv_{\bar{j}}
+\frac{1}{2\sqrt{2}}\oint_{M}vu_i+vu_{\bar{j}}+wu_i+wu_{\bar{j}}.
\end{align*}
Denote the boundary integral by $P$:
\begin{align}\label{P k=2}
\begin{split}
P:=&-\frac{3}{2\sqrt{2}}\oint_{M}u(v_i+w_i+v_{\bar{j}}+w_{\bar{j}})
-\frac{1}{2\sqrt{2}}\oint_{M}vw_i+vw_{\bar{j}}+wv_i+wv_{\bar{j}}
\\
&+\frac{1}{2\sqrt{2}}\oint_{M}vu_i+vu_{\bar{j}}+wu_i+wu_{\bar{j}}.
\end{split}
\end{align}
Thus,
\begin{equation*}
	S_0=6\int_{X}uv_{i\bar{j}}T_1(w)_{j\bar{i}}+P.
\end{equation*}
Our goal is to write $P$ as the sum of a symmetric term and a boundary integral of the form $\oint_M u B(v,w)d\mu$ for some biliear differential operator $B: C^{\infty}(M)\times C^{\infty}(M)\rightarrow C^{\infty}(M)$. To this purpose, we take the symmetrization with respect to $u, v, w$ for the second integral in \eqref{P k=2}:
\begin{equation}\label{S1 k=2}
 S_1:=-\frac{1}{2\sqrt{2}}\oint_{M}vw_i+vw_{\bar{j}}+wv_i+wv_{\bar{j}}+uw_i+uw_{\bar{j}}+wu_i+wu_{\bar{j}}+vu_i+vu_{\bar{j}}+uv_i+uv_{\bar{j}}.
\end{equation}
Then
\begin{align}\label{term 1}
\begin{split}
S_0-S_1=6\int_{X}uv_{i\bar{j}}T_1(w)_{j\bar{i}}-\frac{1}{\sqrt{2}}\oint_{M}u(v_i+w_i+v_{\bar{j}}+w_{\bar{j}})
+\frac{1}{\sqrt{2}}\oint_{M}vu_i+vu_{\bar{j}}+wu_i+wu_{\bar{j}}.
\end{split}
\end{align}
We denote the second and the last integrals by $U_1$ and $Q$ respectively. That is,
\begin{align}\label{U1 Q}
\begin{split}
U_1:=&-\frac{1}{\sqrt{2}}\oint_{M}u(v_iT_1(w)_{n\bar{i}}+w_iT_1(v)_{n\bar{i}}+v_{\bar{j}}T_1(w)_{j\bar{n}}+w_{\bar{j}}T_1(v)_{j\bar{n}}).
\\
Q:=&\frac{1}{\sqrt{2}}\oint_{M} vu_iT_1(w)_{n\bar{i}}+vu_{\bar{j}}T_1(w)_{j\bar{n}}+wu_iT_1(v)_{n\bar{i}}+wu_{\bar{j}}T_1(v)_{j\bar{n}}.
\end{split}
\end{align}
Since $U_1$ is already in the form of $\int_M u B(v,w)$, we will focus on the term $Q$. In order to express $T_1(w)_{n\bar{n}}$, we recall some earlier work of Wang \cite{Wang} on the Hermitian mean curvature and sub-Laplacian operator. We re-organize and summarize some computations of Lemma 1 and Proposition 1 of \cite{Wang} and present them in Lemma \ref{Kohn Laplacian lemma} and Lemma \ref{lem: Wang 2} for the purpose of our setting.

As before let $\nu$ be the unit outward normal vector along $M$ and $T=J\nu$. In addition, $\oo{\nab}$ denotes the Levi-Civita connection on $M$, $\Box_b$ denotes the Kohn Laplacian on $M$ and $H_b$ is the Hermitian mean curvature of $M$. 
\begin{lemma}\label{Kohn Laplacian lemma}
	For any $f\in C^{\infty}(M)$, we have
	\begin{equation}\label{Kohn Laplacian}
		\Box_bf=\oo{\nab}^2f\left(Z_{\alpha}, \oo{Z_{\alpha}}\right)-\frac{\sqrt{-1}}{2}H_b\,Tf+\sqrt{-1}\,L \left(Z_{\alpha},T\right)\oo{Z_{\alpha}}f.
	\end{equation}
\end{lemma}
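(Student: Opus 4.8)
The plan is to compute $\Box_b f=-\dbar_b^*\dbar_b f$ directly on the closed manifold $M$ and then identify the resulting lower-order terms geometrically. First I would evaluate the $L^2$-adjoint $\dbar_b^*$: pairing $\dbar_b f=f_{\bar\alpha}\oo{\theta^\alpha}$ against $\dbar_b g$ for a test function $g$ and integrating by parts in each unitary direction $Z_\alpha$ (there are no boundary terms since $\partial M=\emptyset$) gives
\[
\Box_b f=\sum_{\alpha}\Bigl(Z_{\alpha}(\oo{Z_{\alpha}}f)+(\divr_M Z_{\alpha})\,\oo{Z_{\alpha}}f\Bigr),
\]
where $\divr_M Z_{\alpha}=\tr(\oo{\nab}Z_{\alpha})$ is the divergence on $M$ with respect to the induced metric. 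Unwinding the definition of the Levi--Civita Hessian, $Z_{\alpha}(\oo{Z_{\alpha}}f)=\oo{\nab}^2f(Z_{\alpha},\oo{Z_{\alpha}})+(\oo{\nab}_{Z_{\alpha}}\oo{Z_{\alpha}})f$, reduces \eqref{Kohn Laplacian} to the purely geometric first-order operator identity
\[
\sum_{\alpha}\Bigl(\oo{\nab}_{Z_{\alpha}}\oo{Z_{\alpha}}+(\divr_M Z_{\alpha})\,\oo{Z_{\alpha}}\Bigr)=-\frac{\sqrt{-1}}{2}\,H_b\,T+\sqrt{-1}\sum_{\alpha}L(Z_{\alpha},T)\,\oo{Z_{\alpha}}
\]
along $M$.

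To prove this identity I would decompose the left-hand vector field into its $T^{1,0}M$, $T^{0,1}M$ and $\mathbb{C}T$ components; since $(\divr_M Z_{\alpha})\oo{Z_{\alpha}}$ lies in $T^{0,1}M$, the $T^{1,0}M$- and $\mathbb{C}T$-components come only from $\sum_{\alpha}\oo{\nab}_{Z_{\alpha}}\oo{Z_{\alpha}}$, which I would evaluate via the Gauss and Weingarten formulas $\nab_XY=\oo{\nab}_XY-L(X,Y)\nu$ and $\nab_X\nu=AX$, using that $\mathbb{C}^n$ with its Euclidean metric is K\"ahler (so $\nab$ preserves $T^{1,0}\mathbb{C}^n$ and $T^{0,1}\mathbb{C}^n$, and the bilinear-extended metric vanishes on $T^{0,1}\times T^{0,1}$) and that $Z_n=\tfrac1{\sqrt2}(\nu-\sqrt{-1}T)$. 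The $T^{1,0}M$-component vanishes because $\langle\oo{\nab}_{Z_{\alpha}}\oo{Z_{\alpha}},\oo{Z_{\beta}}\rangle=\langle\nab_{Z_{\alpha}}\oo{Z_{\alpha}},\oo{Z_{\beta}}\rangle=0$. For the $\mathbb{C}T$-component one computes $\langle\oo{\nab}_{Z_{\alpha}}\oo{Z_{\alpha}},T\rangle=\langle\nab_{Z_{\alpha}}\oo{Z_{\alpha}},T\rangle=\sqrt{-1}\langle\nab_{Z_{\alpha}}\oo{Z_{\alpha}},\nu\rangle=-\sqrt{-1}\,L(Z_{\alpha},\oo{Z_{\alpha}})$, where the middle step uses $T=J\nu$, $J\oo{Z_{\alpha}}=-\sqrt{-1}\oo{Z_{\alpha}}$, $\nab J=0$, and the last uses Gauss; summing over $\alpha$ together with the identity $H_b=H-L(T,T)=2\sum_{\alpha}L(Z_{\alpha},\oo{Z_{\alpha}})$ produces exactly $-\tfrac{\sqrt{-1}}{2}H_b\,T$. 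The $T^{0,1}M$-component is the remaining work: the individual ambient coefficients $\langle\nab_{Z_{\alpha}}\oo{Z_{\alpha}},Z_{\beta}\rangle$ and the constituents of $\divr_M Z_{\alpha}$ are not separately intrinsic to $M$, but their combination is, and reorganizing it — most conveniently through the Weingarten relation applied to $Z_n$, which brings in $L(Z_{\alpha},T)$ — yields the claimed $\sqrt{-1}\sum_{\alpha}L(Z_{\alpha},T)\oo{Z_{\alpha}}$.

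I expect this last step to be the main obstacle: it is the bookkeeping of the $T^{0,1}M$-components, in which one must keep straight the flat connection of $\mathbb{C}^n$, the Levi--Civita connection of $M$, and the decomposition $TM\otimes\mathbb{C}=T^{1,0}M\oplus T^{0,1}M\oplus\mathbb{C}T$, and correctly track the normalization constants forced by $Z_n=\tfrac1{\sqrt2}(\nu-\sqrt{-1}T)$ and by the unitary normalization of the frame. There is no conceptual difficulty beyond recognizing $2\sum_{\alpha}L(Z_{\alpha},\oo{Z_{\alpha}})=H_b$ and the off-diagonal contraction $L(Z_{\alpha},T)$; the computation is a finite contraction, and it amounts to a re-assembly, in the present notation, of the computations in Lemma 1 and Proposition 1 of \cite{Wang}.
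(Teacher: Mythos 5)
Your proposal is correct and follows essentially the same route as the paper: compute $\dbar_b^*$ by integration by parts on the closed manifold $M$, reduce \eqref{Kohn Laplacian} to the first-order identity for $\sum_\alpha\bigl(\oo{\nab}_{Z_\alpha}\oo{Z_\alpha}+(\divr_M Z_\alpha)\oo{Z_\alpha}\bigr)$, and evaluate it via the Gauss--Weingarten relations, $T=J\nu$, the K\"ahler property of $\mathbb{C}^n$, and $2\sum_\alpha L(Z_\alpha,\oo{Z_\alpha})=H_b$; the paper organizes the same computation by taking $\divr^M$ of $f_{\bar\alpha}\oo{g}Z_\alpha$ with the test function still present. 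The one step you leave schematic does close exactly as you predict: $\sum_\alpha(\nab_{Z_\alpha}\oo{Z_\alpha},Z_\beta)$ cancels against the $\sum_\gamma(\oo{\nab}_{Z_\gamma}Z_\beta,\oo{Z_\gamma})$ part of $\divr_M Z_\beta$, and the surviving contribution $(\oo{\nab}_T Z_\beta,T)=\sqrt{-1}\,L(Z_\beta,T)$ is precisely the paper's $(\oo{\nab}_T V,T)$ computation.
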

\begin{proof}
	For any $g\in C^{\infty}(M)$, we have
	\begin{align*}
		\bigl(\dbar_b f, \oo{\dbar_b g}\bigr)
		=\oint_{M}f_{\bar{\alpha}}\oo{g_{\bar{\alpha}}}
		=\oint_{M}Z_{\alpha}\left(f_{\bar{\alpha}}\oo{g}\right)-\left(Z_{\alpha}f_{\bar{\alpha}}\right)\,\oo{g}.
	\end{align*}
	Let $V=f_{\bar{\alpha}}\oo{g}Z_{\alpha}:=V^{\alpha}Z_{\alpha}$ and we compute its divergence on $M$. 
	\begin{align*}
		\text{div}^{M}\left(V\right)
		=&\left(\oo{\nab}_{\beta}V, \oo{Z_{\beta}}\right)+\left(\oo{\nab}_{T}V,T\right)
		\\
		=&Z_{\beta}\left(f_{\bar{\beta}}\bar{g}\right)-\left(V, \oo{\nab}_{\beta}\oo{Z_{\beta}}\right)-\left(V,\nab_TT\right)
		\\
		=&Z_{\beta}\left(f_{\bar{\beta}}\bar{g}\right)-\left(Z_{\alpha}, \oo{\nab}_{\beta}\oo{Z_{\beta}}\right)f_{\bar{\alpha}}\bar{g}-\left(JV,\nab_TJT\right)
		\\
		=&Z_{\beta}\left(f_{\bar{\beta}}\bar{g}\right)-\left(Z_{\alpha}, \oo{\nab}_{\beta}\oo{Z_{\beta}}\right)f_{\bar{\alpha}}\bar{g}+\sqrt{-1}\left(V,\nab_T\nu\right)
		\\
		=&Z_{\beta}\left(f_{\bar{\beta}}\bar{g}\right)-\left(Z_{\alpha}, \oo{\nab}_{\beta}\oo{Z_{\beta}}\right)f_{\bar{\alpha}}\bar{g}+\sqrt{-1}\,L \left(V,T\right)
		\\
		=&Z_{\beta}\left(f_{\bar{\beta}}\bar{g}\right)-\left(Z_{\alpha}, \oo{\nab}_{\beta}\oo{Z_{\beta}}\right)f_{\bar{\alpha}}\bar{g}+\sqrt{-1}f_{\bar{\beta}}\oo{g}\,L \left(Z_{\beta},T\right).
	\end{align*}
	Therefore,
	\begin{align*}
		-\left(\Box_bf,\oo{g}\right)=\oint_{M}\left(Z_{\alpha}, \oo{\nab}_{\beta}\oo{Z_{\beta}}\right)f_{\bar{\alpha}}\bar{g}-\sqrt{-1}f_{\bar{\alpha}}\oo{g}\,L \left(Z_{\alpha},T\right)-\left(Z_{\alpha}f_{\bar{\alpha}}\right)\oo{g}.
	\end{align*}
	Since this identity works for any $g\in C^{\infty}(M)$, we have
	\begin{align*}
		\Box_bf
		=&Z_{\alpha}\oo{Z_{\alpha}}f-\left(Z_{\alpha}, \oo{\nab}_{\beta}\oo{Z_{\beta}}\right)\oo{Z_{\alpha}}f+\sqrt{-1}\,L \left(Z_{\alpha},T\right)\oo{Z_{\alpha}}f
		\\
		=&\oo{\nab}^2f\left(Z_{\alpha}, \oo{Z_{\alpha}}\right)+\left(\oo{\nab}_{Z_{\alpha}}\oo{Z_{\alpha}}\right)f-\left(Z_{\alpha}, \oo{\nab}_{\beta}\oo{Z_{\beta}}\right)\oo{Z_{\alpha}}f+\sqrt{-1}\,L \left(Z_{\alpha},T\right)\oo{Z_{\alpha}}f
		\\
		=&\oo{\nab}^2f\left(Z_{\alpha}, \oo{Z_{\alpha}}\right)+\left(T, \oo{\nab}_{\beta}\oo{Z_{\beta}}\right)Tf+\sqrt{-1}\,L \left(Z_{\alpha},T\right)\oo{Z_{\alpha}}f
		\\
		=&\oo{\nab}^2f\left(Z_{\alpha}, \oo{Z_{\alpha}}\right)-\sqrt{-1}\left(\nab_{\beta}\nu, \oo{Z_{\beta}}\right)Tf+\sqrt{-1}\,L \left(Z_{\alpha},T\right)\oo{Z_{\alpha}}f
		\\
		=&\oo{\nab}^2f\left(Z_{\alpha}, \oo{Z_{\alpha}}\right)-\sqrt{-1}\,L \left(Z_{\alpha},\oo{Z_{\alpha}}\right)Tf+\sqrt{-1}\,L \left(Z_{\alpha},T\right)\oo{Z_{\alpha}}f.
	\end{align*}
	Recall that $X_i= \sqrt{2}\,\text{Re } Z_i$ and $Y_i=-\sqrt{2}\,\text{Im }Z_i$ for $1\leq i\leq n$. Thus,
	\begin{align*}
		L \left(Z_{\alpha},\oo{Z_{\alpha}}\right)=\frac{1}{2}L \left(X_{\alpha},X_{\alpha}\right)+\frac{1}{2}L \left(Y_{\alpha},Y_{\alpha}\right)=\frac{1}{2}H_b.
	\end{align*}
	Therefore, 
	\begin{align*}
		\Box_bf=\oo{\nab}^2f\left(Z_{\alpha}, \oo{Z_{\alpha}}\right)-\frac{\sqrt{-1}}{2}H_b\,Tf+\sqrt{-1}\,L \left(Z_{\alpha},T\right)\oo{Z_{\alpha}}f.
	\end{align*}
\end{proof}

\begin{rmk}\label{rmk integration by parts on M}
	By performing the same computation as in Lemma \ref{Kohn Laplacian lemma}, we can in fact prove the following integration by parts identity on $M$. For any smooth differential forms $a_{\alpha}\theta^{\alpha}$ or $a_{\bar{\alpha}}\oo{\theta^{\alpha}}$ on $M$ and any $f\in C^{\infty}(M)$, we have
	\begin{align*}
	\oint_{M} f_{\alpha}a_{\bar{\alpha}}
	=&\oint_{M} f\,\left(Z_{\alpha}, \oo{\nab}_{\beta}\oo{Z_{\beta}}\right)a_{\bar{\alpha}}-\sqrt{-1}f\,L \left(Z_{\alpha},T\right)a_{\bar{\alpha}}-fZ_{\alpha}a_{\bar{\alpha}},
	\\
	\oint_{M} f_{\bar{\alpha}}a_{\alpha}
	=&\oint_{M} f\left(\oo{Z_{\alpha}}, \oo{\nab}_{\bar{\beta}}Z_{\beta}\right)a_{\alpha}+\sqrt{-1}f\,L \left(\oo{Z_{\alpha}},T\right)a_{\alpha}-f\oo{Z_{\alpha}}a_{\alpha}.
	\end{align*}
	These integration by parts formulas yield that for any boundary integral with the holomorphic or antiholomorphic derivatives on $f$, we can always write it into a boundary integral whose integrand factors through $f$. 
\end{rmk}

We compare complex the Laplacian $\Delta$ on $\mathbb{C}^n$ and the Kohn Laplacian $\Box_b$ on $M$ for latter purposes.
\begin{lemma}\label{lem: Wang 2}
	For any $u\in C^{\infty}(\oo{X})$, we have
	\begin{equation}\label{Laplacian relation}
	\Delta u-u_{n\bar{n}}=\Box_bu-\sqrt{-1}u_{\bar{\alpha}}L \left(Z_{\alpha},T\right)+\frac{1}{\sqrt{2}}H_bu_{\bar{n}}
	\, \mbox{ on } M.
	\end{equation}
\end{lemma}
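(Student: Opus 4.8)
The plan is to split $\Delta u$ along $M$ into its tangential part and the $n\bar n$-part, and then identify the tangential part with $\Box_b u$ via Lemma \ref{Kohn Laplacian lemma}. First I would write $\Delta u = u_{i\bar i} = u_{\alpha\bar\alpha} + u_{n\bar n}$, so that \eqref{Laplacian relation} is equivalent to computing $u_{\alpha\bar\alpha}$ (summed over $1\le\alpha\le n-1$) intrinsically on $M$. Recalling the definition $u_{i\bar j}=\nab^2u(\oo{Z_j},Z_i)$, and that $Z_\alpha\in T^{1,0}M$, $\oo{Z_\alpha}\in T^{0,1}M$ are tangent to $M$, the ambient complex Hessian differs from the intrinsic one on $M$ only by a second fundamental form term. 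Concretely, for $X,Y$ tangent to $M$ one has $\bigl(\nab_XY,\nu\bigr)=-\bigl(Y,\nab_X\nu\bigr)=-L(X,Y)$, i.e. the Gauss formula $\nab_XY=\oo{\nab}_XY-L(X,Y)\nu$; hence $\nab^2u(X,Y)=\oo{\nab}^2(u|_M)(X,Y)+L(X,Y)\,u_\nu$ for all complexified tangent fields $X,Y$. Taking $X=\oo{Z_\alpha}$, $Y=Z_\alpha$, summing, using the symmetry of $\oo{\nab}^2$, and invoking the identity $\sum_\alpha L(Z_\alpha,\oo{Z_\alpha})=\tfrac12 H_b$ established in the proof of Lemma \ref{Kohn Laplacian lemma}, I obtain
\[
	u_{\alpha\bar\alpha} \;=\; \oo{\nab}^2(u|_M)\bigl(Z_\alpha,\oo{Z_\alpha}\bigr) + \tfrac12 H_b\, u_\nu .
\]

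Next I would feed this into Lemma \ref{Kohn Laplacian lemma} applied to $f=u|_M$. Rearranging \eqref{Kohn Laplacian} gives $\oo{\nab}^2(u|_M)\bigl(Z_\alpha,\oo{Z_\alpha}\bigr)=\Box_b u+\tfrac{\sqrt{-1}}{2}H_b\,Tu-\sqrt{-1}\,L(Z_\alpha,T)\,u_{\bar\alpha}$, where I use that $\oo{Z_\alpha}$ is tangential so $\oo{Z_\alpha}u=u_{\bar\alpha}$. Substituting into the previous display yields
\[
	\Delta u-u_{n\bar n} \;=\; \Box_b u-\sqrt{-1}\,u_{\bar\alpha}\,L(Z_\alpha,T)+\tfrac{\sqrt{-1}}{2}H_b\,Tu+\tfrac12 H_b\,u_\nu .
\]

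Finally I would collect the two mean-curvature terms into one. Since $Z_n=\tfrac{1}{\sqrt2}(\nu-\sqrt{-1}T)$, we have $u_{\bar n}=\oo{Z_n}u=\tfrac{1}{\sqrt2}(u_\nu+\sqrt{-1}\,Tu)$, so $\tfrac12 H_b u_\nu+\tfrac{\sqrt{-1}}{2}H_b Tu=\tfrac{1}{\sqrt2}H_b u_{\bar n}$, which is exactly \eqref{Laplacian relation}.

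The only delicate point is bookkeeping with conventions: keeping the sign in the Gauss formula consistent with the paper's $AX=\nab_X\nu$, $L(X,Y)=(AX,Y)$, and being careful that tangential first derivatives computed ambiently versus intrinsically agree while $Z_n$ is genuinely transverse to $M$. Everything else reduces to the symmetry of the intrinsic Hessian and the already-proved trace identity $\sum_\alpha L(Z_\alpha,\oo{Z_\alpha})=\tfrac12 H_b$, so I expect no substantive obstacle beyond careful accounting.
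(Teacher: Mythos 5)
Your proposal is correct and follows essentially the same route as the paper's proof: the Gauss-formula correction $\nab^2u=\oo{\nab}^2u+L\,u_\nu$ on tangential directions, the trace identity $\sum_\alpha L(Z_\alpha,\oo{Z_\alpha})=\tfrac12H_b$, substitution of Lemma \ref{Kohn Laplacian lemma}, and the recombination $\tfrac12H_bu_\nu+\tfrac{\sqrt{-1}}{2}H_bTu=\tfrac{1}{\sqrt2}H_bu_{\bar n}$. The only (cosmetic) difference is that you apply the Gauss formula directly to the complexified frame $Z_\alpha,\oo{Z_\alpha}$ by bilinearity, whereas the paper first works with the real frame $X_\alpha,Y_\alpha$ and then recombines.
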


\begin{proof}
	Recall that $X_i= \sqrt{2}\,\text{Re } Z_i$ and $Y_i=-\sqrt{2}\,\text{Im }Z_i$ for $1\leq i\leq n$. In particular, $X_n=\nu$ and $Y_n=T=J\nu$. 
	By the definition of the Hessian matrix, 
	\begin{align*}
	\nab^2u\left(X_{\alpha}, X_{\beta}\right)
	=&X_{\alpha}X_{\beta}u-\left(\nab_{X_{\alpha}}X_{\beta}\right) u,
	\\
	\oo{\nab}^2u\left(X_{\alpha}, X_{\beta}\right)
	=&X_{\alpha}X_{\beta}u-\left(\oo{\nab}_{X_{\alpha}}X_{\beta}\right) u.
	\end{align*}
	Therefore,
	\begin{align*}
		\nab^2u\left(X_{\alpha}, X_{\beta}\right)-\oo{\nab}^2u\left(X_{\alpha}, X_{\beta}\right)
		=&-\left(\nab_{X_{\alpha}}X_{\beta}\right) u+\left(\oo{\nab}_{X_{\alpha}}X_{\beta}\right) u
		\\
		=&L \left(X_{\alpha},X_{\beta}\right)u_{\nu}.
	\end{align*}
	Similarly, 
	\begin{align*}
		\nab^2u\left(Y_{\alpha}, Y_{\beta}\right)-\oo{\nab}^2u\left(Y_{\alpha}, Y_{\beta}\right)
		=&-\left(\nab_{Y_{\alpha}}Y_{\beta}\right) u+\left(\oo{\nab}_{Y_{\alpha}}Y_{\beta}\right) u
		\\
		=&L \left(Y_{\alpha},Y_{\beta}\right)u_{\nu}.
	\end{align*}
	Combining these two identities and taking the trace, we have
	\begin{align*}
	2\Delta u-\nab^2u\left(X_n, X_n\right)-\nab^2u\left(Y_n,Y_n\right)
	=&\oo{\nab}^2u\left(X_{\alpha},X_{\alpha}\right)+\oo{\nab}^2u\left(Y_{\alpha},Y_{\alpha}\right)+H_bu_{\nu}
	\\
	=&2\oo{\nab}^2u\left(Z_{\alpha},\oo{Z_{\alpha}}\right)+H_bu_{\nu},
	\end{align*}
	where $H_b=H-L (Y_n,Y_n)$ is the Hermitian mean curvature. By \eqref{Kohn Laplacian}, it follows that
	\begin{align*}
		2\Delta u-\nab^2u\left(X_n, X_n\right)-\nab^2u\left(Y_n,Y_n\right)
		=2\Box_bu+\sqrt{-1}H_bTu-2\sqrt{-1}u_{\bar{\alpha}}L \left(Z_{\alpha},T\right)+H_bu_{\nu}
	\end{align*}
	
	Note that 
	\begin{align*}
	\nab^2u(Z_n,\oo{Z_n})=\frac{1}{2}\nab^2u\left(X_n,X_n\right)+\frac{1}{2}\nab^2u\left(Y_n,Y_n\right).
	\end{align*}
	So 
	\begin{align*}
	2\Delta u-2u_{n\bar{n}}=&2\Box_bu-2\sqrt{-1}u_{\bar{\alpha}}L \left(Z_{\alpha},T\right)+\sqrt{-1}H_bTu+H_bu_{\nu}
	\\
	=&2\Box_bu-2\sqrt{-1}u_{\bar{\alpha}}L \left(Z_{\alpha},T\right)+\sqrt{2}H_bu_{\bar{n}}.
	\end{align*}
\end{proof}

\begin{rmk}
	By the definition of the Newton transformation tensor \eqref{def: Newton transformation tensor}, we have
	\begin{equation*}
		T_1(u)_{n\bar{n}}=\delta^{ni_1}_{nj_1}u_{i_1\bar{j_1}}=\Delta u-u_{n\bar{n}}.
	\end{equation*}
	Combining this with \eqref{Laplacian relation} yields
	\begin{equation*}
		T_1(u)_{n\bar{n}}= \Box_bu-\sqrt{-1}u_{\bar{\alpha}}L \left(Z_{\alpha},T\right)+\frac{1}{\sqrt{2}}H_bu_{\bar{n}}.
	\end{equation*}
	By taking the real parts of both sides, we have
	\begin{equation}\label{Tnn}
		T_1(u)_{n\bar{n}}= \Delta_bu-\sqrt{-1}u_{\bar{\alpha}}L \left(Z_{\alpha},T\right)+\sqrt{-1}u_{\alpha}L \left(\oo{Z_{\alpha}},T\right)+\frac{1}{2}H_bu_{\nu},
	\end{equation}
	where $\Delta_b$ is the sub-Lapacian on $M$.
\end{rmk}

Now we are ready to proceed by \eqref{Tnn}. First, we decompose $Q$ in \eqref{U1 Q} as
\begin{align}\label{term 2}
	\begin{split}
	Q&=\frac{1}{\sqrt{2}}\oint_{M} vu_{\alpha}T_1(w)_{n\bar{\alpha}}+vu_{\bar{\beta}}T_1(w)_{\beta\bar{n}}+wu_{\alpha}T_1(v)_{n\bar{\alpha}}+wu_{\bar{\beta}}T_1(v)_{\beta\bar{n}}
	\\
	&\quad+\frac{1}{\sqrt{2}}\oint_{M} vu_{n}T_1(w)_{n\bar{n}}+vu_{\bar{n}}T_1(w)_{n\bar{n}}+wu_nT_1(v)_{n\bar{n}}+wu_{\bar{n}}T_1(v)_{n\bar{n}}
	\\
	&:=U_2+Q_1.
	\end{split}
\end{align}

By Remark \ref{rmk integration by parts on M}, $U_2$ can be written into the form of $\oint_{M}u\cdot B(v,w)$ for some bilinear differential operator $B$. Now we consider the integral $Q_1$. Since $u_n+u_{\bar{n}}=\sqrt{2}u_{\nu}$, $Q_1$ simplifies into
\begin{equation*}
	Q_1=\oint_{M} vu_{\nu}T_1(w)_{n\bar{n}}+wu_{\nu}T_1(v)_{n\bar{n}}.
\end{equation*}
Take the symmetrization of $Q_1$:
\begin{align}\label{S2 k=2}
S_2:=\oint_{M} vu_{\nu}T_1(w)_{n\bar{n}}+wu_{\nu}T_1(v)_{n\bar{n}}+ uv_{\nu}T_1(w)_{n\bar{n}}+wv_{\nu}T_1(u)_{n\bar{n}}+ vw_{\nu}T_1(u)_{n\bar{n}}+uw_{\nu}T_1(v)_{n\bar{n}}.
\end{align}
Thus,
\begin{align*}
	Q_1-S_2
	=-\oint_M uv_{\nu}T_1(w)_{n\bar{n}}+uw_{\nu}T_1(v)_{n\bar{n}}
	-\oint_M wv_{\nu}T_1(u)_{n\bar{n}}+vw_{\nu}T_1(u)_{n\bar{n}}.
\end{align*}
We set
\begin{equation*}
	U_3:=-\oint_M uv_{\nu}T_1(w)_{n\bar{n}}+uw_{\nu}T_1(v)_{n\bar{n}},
\end{equation*}
which is already in the form of $\oint_M u B(v,w)$. By \eqref{Tnn}, we have
\begin{align*}
Q_1-S_2-U_3=&-\oint_{M}wv_{\nu}\Delta_b u-\sqrt{-1}wv_{\nu}\bigl(u_{\bar{\alpha}}L \left(Z_{\alpha},T\right)-u_{\alpha}L \left(\oo{Z_{\alpha}},T\right)\bigr)+\frac{1}{2}wv_{\nu}H_bu_{\nu}\\
&-\oint_{M}vw_{\nu}\Delta_b u-\sqrt{-1}vw_{\nu}\bigl(u_{\bar{\alpha}}L \left(Z_{\alpha},T\right)-u_{\alpha}L \left(\oo{Z_{\alpha}},T\right)\bigr)+\frac{1}{2}vw_{\nu}H_bu_{\nu}.
\end{align*}
Set
\begin{align*}
	U_4:=-\oint_{M}(wv_{\nu}+vw_{\nu})\Delta_b u-\sqrt{-1}(wv_{\nu}+vw_{\nu})\bigl(u_{\bar{\alpha}}L \left(Z_{\alpha},T\right)-u_{\alpha}L \left(\oo{Z_{\alpha}},T\right)\bigr).
\end{align*}
Then
\begin{align}\label{term 3}
	Q_1-S_2-U_3-U_4=-\frac{1}{2}\oint_Mwv_{\nu}H_bu_{\nu}+vw_{\nu}H_bu_{\nu}:=Q_2
\end{align}
Using the fact that $\Delta_b$ is self-adjoint and the integration by parts identities in Remark \ref{rmk integration by parts on M}, we notice that $U_4$ is actually in the form of $\oint_M uB(v,w)$. It remains to consider $Q_2$. Take the symmetrization of $Q_2$:
\begin{equation}\label{S3 k=2}
	S_3=-\frac{1}{2}\oint_Mwv_{\nu}H_bu_{\nu}+vw_{\nu}H_bu_{\nu}+uw_{\nu}H_b v_{\nu}.
\end{equation} 
Thus,
\begin{equation}\label{term 4}
	Q_2-S_3=\frac{1}{2}\oint_Muw_{\nu}H_bv_{\nu},
\end{equation}
which is again in the form of $\oint_M uB(v,w)$. By combining \eqref{term 1}, \eqref{term 2}, \eqref{term 3} and \eqref{term 4}, we obtain
\begin{align*}
	S_0-S_1-S_2-S_3=12\int_X u\sigma_2(D^{1,1}v, D^{1,1}w)+\sum_{j=1}^4U_j+\frac{1}{2}\oint_Muw_{\nu}H_bv_{\nu}.
\end{align*} 
Since each $U_j$ for $1\leq j\leq 4$ is in the form of $\oint_MuB(v,w)$, there exists some bilinear differential operator $B_2(\cdot, \cdot)$ such that
\begin{align*}
S_0-S_1-S_2-S_3=12\int_{X}u\sigma_2(D^{1,1}v, D^{1,1}w)-\oint_{M}uB_2(v,w).
\end{align*}
The result follows immediately by setting
\begin{equation*}
-L_3(u,v,w):=\frac{1}{12}\bigl(S_0-S_1-S_2-S_3\bigr).
\end{equation*}
\end{proof}

Now we set the functional $\mathcal{E}_3$ as
\begin{equation*}
 \mathcal{E}_3(u):=L_3(u,u,u).
\end{equation*}

By combining \eqref{term 1}, \eqref{S2 k=2} and \eqref{S3 k=2}, $\mathcal{E}_3$ writes into
\begin{align*}
\mathcal{E}_3(u)
=-\int_{X}u\sigma_2(D^{1,1}u)+\frac{1}{2}\oint_Muu_{\nu}T_1(u)_{n\bar{n}}-\frac{1}{8}\oint_M uu_{\nu}^2H_b.
\end{align*}
By using \eqref{Tnn}, we can simplify $\mathcal{E}_3$ into
\begin{align}\label{E3}
\mathcal{E}_3(u)
=-\int_{X}u\sigma_2(D^{1,1}u)
+\frac{1}{2}\oint_{M} uu_{\nu}\Delta_bu
+\frac{1}{8}\oint_{M}uu_{\nu}^2H_b
-\frac{\sqrt{-1}}{2}\oint_{M}uu_{\nu}L\bigl(\oo{\nab}^{1,0}u-\oo{\nab}^{0,1}u,T\bigr),
\end{align}
where $\oo{\nab}^{1,0}u=u_{\bar{\alpha}}Z_{\alpha}$ and $\oo{\nab}^{0,1}u=u_{\alpha} \oo{Z_{\alpha}}$.

\begin{rmk}\label{rmk domain of E_3}
	Clearly, the functional $\mathcal{E}_{3}$ depends on at most second order tangential derivatives on $M$ and at most first order transverse derivative along $\nu$. Therefore, $\mathcal{E}_{3}(u)$ is well-defined if $u\in C^2(X)\cap C^2(M)\cap C^1(\oo{X})$.
\end{rmk}

\section{Proof of Proposition \ref{main prop} for general $k$}\label{sec: dimension k}
In this section, we will construct $B_k$ for general $k$ by an inductive argument. We begin our construction with the following symmetric multilinear differential operator
\begin{align}\label{S0}
\begin{split}
S_0(u,w^1,\cdots,w^k):=& -\sum_p \Bigl[\int_X u_i w_{\bar{j}}^p T_{k-1}(D^{1,1} w^{\w p})_{j\bar{i}}dx+\int_X w^p_i u_{\bar{j}} T_{k-1}(D^{1,1} w^{\w p})_{j\bar{i}}dx\Bigr]\\
&-\sum_{p\neq q} \int_X w_i^p w_{\bar{j}}^q T_{k-1}(D^{1,1} u, D^{1,1} w^{\w p,q})_{j\bar{i}}dx.\\
\end{split}
\end{align}
where $D^{1,1}w^{\w p}$ denotes the $(k-1)$-tuple $(D^{1,1}w^1,\cdots, D^{1,1}w^{p-1}, D^{1,1}w^{p+1},\cdots, D^{1,1}w^k)$, obtained from $(D^{1,1}w^1,\cdots,D^{1,1}w^k)$ by removing the $p$-th entry $D^{1,1}w^p$, and likewise $D^{1,1} w^{\w p,q}$ denotes the $(k-2)$-tuple obtained from $(D^{1,1}w^1,\cdots,D^{1,1}w^k)$ by removing the $p$-th entry $D^{1,1}w^p$ and the $q$-th entry $D^{1,1}w^q$.  Similar notations will be used to remove more entries from the list.  We shall perform integration by parts repeatedly to rewrite \eqref{S0} as a sum of an interior and a boundary integral, both of which have integrands which factor through $u$. 

In the following, we use $S_i$ to denote terms that are symmetric in $u, w^1,\cdots, w^k$, no matter they are of interior integral or boundary integral. We use $U_i$ to denote some boundary integral of the form $\int uB(w^1,\cdots, w^k)d\mu$ for some multi-linear operator $B$. And we use $Q_i$ to denote terms that will be repeatedly decomposed in the induction, and eventually disappear when the induction terminates. 

\begin{proof}[Proof of Proposition \ref{main prop}]
	Note that $S_0$ is symmetric.  Our goal is to rewrite \eqref{S0} in the desired form \eqref{Lk}.  To that end, writing \eqref{S0} as a sum over pairs $p\neq q$ and then integrating by parts in $X$ yields
	\begin{align*}
		S_0=& -\sum_{p\neq q} \Bigl[\frac{1}{k-1}\int_X u_i w_{\bar{j}}^p T_{k-1}(D^{1,1} w^{\w p})_{j\bar{i}}dx+\frac{1}{k-1}\int_X w^p_i u_{\bar{j}} T_{k-1}(D^{1,1} w^{\w p})_{j\bar{i}}dx
		\\&\quad \qquad + \int_X w_i^p w_{\bar{j}}^q T_{k-1}(D^{1,1} u, D^{1,1} w^{\w p,q})_{j\bar{i}}dx\Bigr]\\
		&=\sum_{p\neq q}\Bigl[\frac{2}{k-1}
		\int_X u w_{i\bar{j}}^p T_{k-1}(D^{1,1} w^{\w p})_{j\bar{i}}dx+\int_X w^p u_{i\bar{j}} T_{k-1}( D^{1,1} w^{\w p})_{j\bar{i}}dx\\
		&\quad\qquad-\frac{1}{(k-1)\sqrt{2}}\oint_{M} u w_{\bar{j}}^p T_{k-1}(D^{1,1} w^{\w p})_{j\bar{n}} d\mu
		-\frac{1}{(k-1)\sqrt{2}}\oint_{M} u w_{i}^p T_{k-1}(D^{1,1} w^{\w p})_{n\bar{i}} d\mu 
		\\
		&\quad\qquad-\frac{1}{2\sqrt{2}}\oint_{M} w^p_i w^q T_{k-1}(D^{1,1} u, D^{1,1} w^{\w p,q })_{n\bar{i}} d\mu
		-\frac{1}{2\sqrt{2}}\oint_{M} w^p w_{\bar{j}}^q T_{k-1}(D^{1,1} u, D^{1,1} w^{\w p,q })_{j\bar{n}} d\mu\Bigr].
	\end{align*}
	Performing integration by parts two more times on the second term $\int_X w^p u_{i\bar{j}} T_{k-1}(D^{1,1} w^{\w p})_{j\bar{i}}dx$, we have
	\begin{align*}
		S_0=&\sum_{p\neq q}\Bigl[\frac{k+1}{k-1}
		\int_X u w_{i\bar{j}}^p T_{k-1}(D^{1,1} w^{\w p})_{j\bar{i}}dx\\
		&\qquad-\frac{k+1}{2(k-1)\sqrt{2}}\oint_{M} u w_{i}^p T_{k-1}(D^{1,1} w^{\w p})_{n\bar{i}} d\mu 
		-\frac{k+1}{2(k-1)\sqrt{2}}\oint_{M} u w_{\bar{j}}^p T_{k-1}(D^{1,1} w^{\w p})_{j\bar{n}} d\mu
		\\
		&\qquad-\frac{1}{2\sqrt{2}}\oint_{M} w^p_i w^q T_{k-1}(D^{1,1} u, D^{1,1} w^{\w p,q })_{n\bar{i}} d\mu
		-\frac{1}{2\sqrt{2}}\oint_{M} w^p w_{\bar{j}}^q T_{k-1}(D^{1,1} u, D^{1,1} w^{\w p,q })_{j\bar{n}} d\mu
		\\
		&\qquad+\frac{1}{2\sqrt{2}}\oint_{M} w^p u_i T_{k-1}( D^{1,1} w^{\w p })_{n\bar{i}} d\mu
		+\frac{1}{2\sqrt{2}}\oint_{M} w^p u_{\bar{j}} T_{k-1}( D^{1,1} w^{\w p })_{j\bar{n}} d\mu\Bigr].
	\end{align*}
	Denote the boundary integral by $P$:
	\begin{align}\label{P}
	\begin{split}
	P=&-\frac{k+1}{2\sqrt{2}}\sum_{p}\Bigl[
	\oint_{M} u w_{i}^p T_{k-1}(D^{1,1} w^{\w p})_{n\bar{i}} d\mu 
	+\oint_{M} u w_{\bar{j}}^p T_{k-1}(D^{1,1} w^{\w p})_{j\bar{n}} d\mu\Bigr]
	\\
	&+\frac{1}{2\sqrt{2}}\sum_{p\neq q}\Bigl[-\oint_{M} w^p_i w^q T_{k-1}(D^{1,1} u, D^{1,1} w^{\w p,q })_{n\bar{i}} d\mu
	-\oint_{M} w^p w_{\bar{j}}^q T_{k-1}(D^{1,1} u, D^{1,1} w^{\w p,q })_{j\bar{n}} d\mu
	\\
	&\quad\qquad\qquad+\oint_{M} w^p u_i T_{k-1}( D^{1,1} w^{\w p })_{n\bar{i}} d\mu
	+\oint_{M} w^p u_{\bar{j}} T_{k-1}( D^{1,1} w^{\w p })_{j\bar{n}} d\mu\Bigr].
	\end{split}
	\end{align}
	Thus 
	\begin{equation}\label{S0T}
		S_0 = k^2(k+1) \int_X u \sigma_{k}(D^{1,1} w^{1},\cdots, D^{1,1} w^k)dx + P.
	\end{equation}
	We aim to write $T$ as the sum of a symmetric term and a boundary integral of the form $\int uB(w^1,\cdots, w^k)d\mu$.  To that end, consider the symmetrization of the second line in \eqref{P}:
	\begin{align}\label{S1}
	\begin{split}
	S_1&:=\frac{1}{2\sqrt{2}}\sum_{p\neq q}\Bigl[ - \oint_{M} w^p_i w^q T_{k-1}(D^{1,1} u, D^{1,1} w^{\w p,q})_{n\bar{i}}d\mu- \oint_{M} w^p w_{\bar{j}}^q T_{k-1}(D^{1,1} u, D^{1,1} w^{\w p,q})_{j\bar{n}}d\mu\\
	&\qquad\qquad\quad-\frac{1}{k-1}\oint_{M} w^p u_i T_{k-1}( D^{1,1} w^{\w p})_{n\bar{i}}d\mu-\frac{1}{k-1}\oint_{M} w^p u_{\bar{j}} T_{k-1}( D^{1,1} w^{\w p})_{j\bar{n}}d\mu
	\\&\qquad\qquad\quad-\frac{1}{k-1}\oint_{M}  u w^p_i T_{k-1}( D^{1,1} w^{\w p})_{n\bar{i}}d\mu-\frac{1}{k-1}\oint_{M}  u w^p_{\bar{j}} T_{k-1}( D^{1,1} w^{\w p})_{j\bar{n}}d\mu\Bigr].\\
	\end{split}
	\end{align}
	Note that $S_1$ is symmetric with respect to $u, w^1, \cdots, w^k$.  Combining \eqref{P} and \eqref{S1} yields
	\begin{align}
	\begin{split}\label{TS1}
	P=&S_1-\frac{k}{2\sqrt{2}}\sum_{p}\Bigl[\oint_{M} u w_i^p T_{k-1}(D^{1,1} w^{\w p})_{n\bar{i}} d\mu+\oint_{M} u w_{\bar{j}}^p T_{k-1}(D^{1,1} w^{\w p})_{j\bar{n}} d\mu\Bigr]
	\\
	&+\frac{k}{2\sqrt{2}}\sum_{p}\Bigl[\oint_{M} w^p u_i T_{k-1}( D^{1,1} w^{\w p})_{n\bar{i}}d\mu+\oint_{M} w^p u_{\bar{j}} T_{k-1}( D^{1,1} w^{\w p})_{j\bar{n}}d\mu\Bigr].
	\end{split}
	\end{align}

	Define
	\begin{align*}
	U_1 & := -\frac{k}{2\sqrt{2}}\sum_{p}\Bigl[\oint_{M} u w_i^p T_{k-1}(D^{1,1} w^{\w p})_{n\bar{i}} d\mu+\oint_{M} u w_{\bar{j}}^p T_{k-1}(D^{1,1} w^{\w p})_{j\bar{n}} d\mu\Bigr], \\
	Q & := \frac{k}{2\sqrt{2}}\sum_{p}\Bigl[\oint_{M} w^p u_i T_{k-1}( D^{1,1} w^{\w p})_{n\bar{i}}d\mu+\oint_{M} w^p u_{\bar{j}} T_{k-1}( D^{1,1} w^{\w p})_{j\bar{n}}d\mu\Bigr].
	\end{align*}
	Thus,
	\begin{equation*}
		P= U_1+ S_1+ Q.
	\end{equation*}
	Note that $U_1$ is in the desired form  $\oint u B(w^1,\cdots, w^p) d\mu$.  It remains to deal with the term $Q$.  We can decompose $Q$ as
	\begin{align*}
		Q=& \frac{k}{2\sqrt{2}}\sum_{p}\Bigl[\oint_{M} w^p u_{\alpha} T_{k-1}( D^{1,1} w^{\w p})_{n\bar{\alpha}}d\mu+\oint_{M} w^p u_{\bar{\beta}} T_{k-1}( D^{1,1} w^{\w p})_{\beta\bar{n}}d\mu\Bigr]\\
		&+\frac{k}{2\sqrt{2}}\sum_{p}\Bigl[\oint_{M} w^p u_n T_{k-1}( D^{1,1} w^{\w p})_{n\bar{n}}d\mu+\oint_{M} w^p u_{\bar{n}} T_{k-1}( D^{1,1} w^{\w p})_{n\bar{n}}d\mu\Bigr].
	\end{align*}
	Recall that the Greek indices $\alpha,\beta\in\{1,\cdots,n-1\}$ denote the holomorphic tangential directions in $T^{1,0}M$ and $n$ denotes the direction $Z_n=\frac{1}{\sqrt{2}}(\nu-\sqrt{-1}J\nu) \in T^{1,0}\mathbb{C}^n$  along $M$.  By the definition of the Newton transformation tensor \eqref{def: Newton transformation tensor}, $T_{k-1}( D^{1,1} w^{\w p})_{n \bar{n}}=\sigma_{k-1}(D^{1,1} w\rvert_{\mathcal{H}}^{\w p} )$, where $\mathcal{H}$ is the distribution $\{Y\in TM: \left(Y,T\right)=0\}$ and $D^{1,1}w\rvert_{\mathcal{H}}^{\w p}$ denotes the list of the restrictions $D^{1,1}w^1\rvert_{\mathcal{H}},\cdots,D^{1,1}w^n\rvert_{\mathcal{H}}$ with the $p$-th element removed. In terms of the local frame, $D^{1,1}w\rvert_{\mathcal{H}}$ is the Hermitian matrix $(w_{\alpha\bar{\beta}})_{1\leq \alpha,\beta\leq n-1}$.
	
	We define 
	\begin{align*}
	U_2 & := \frac{k}{2\sqrt{2}}\sum_{p}\Bigl[\oint_{M} w^p u_{\alpha} T_{k-1}( D^{1,1} w^{\w p})_{n\bar{\alpha}}d\mu+\oint_{M} w^p u_{\bar{\beta}} T_{k-1}( D^{1,1} w^{\w p})_{\beta\bar{n}}d\mu\Bigr], \\
	Q_1 & := \frac{k}{2}\sum_{p}\oint_{M} w^p u_{\nu} \sigma_{k-1}( D^{1,1} w^{\w p}\rvert_{\mathcal{H}})d\mu.
	\end{align*}
	Note that
	\begin{equation*}
		u_n+u_{\bar{n}}=Z_nu+\oo{Z_n}u=\frac{1}{\sqrt{2}}\bigl(\nu-\sqrt{-1}T\bigr)u+\frac{1}{\sqrt{2}}\bigl(\nu+\sqrt{-1}T\bigr)u=\sqrt{2}u_{\nu},
	\end{equation*} 
	and thus we have
	\begin{equation}\label{QQ1}
		Q = U_2 + Q_1
	\end{equation}
	Performing integration by parts along $M$ as in Remark \ref{rmk integration by parts on M} with $f=u$, $U_2$ writes into the form of $\oint uB(w^1,\cdots,w^p)d\mu$. Therefore, we only need to consider $Q_1$.
	
	Take the symmetrization of $Q_1$:
	\begin{align*}
	S_2&:=\sum_{p\neq q}\Bigl[ \frac{k}{2(k-1)}\oint_{M} w^p u_{\nu} \sigma_{k-1}(D^{1,1} w\rvert_{\mathcal{H}}^{\w p} )d\mu + \frac{k}{2(k-1)}\oint_{M} u w^p_{\nu} \sigma_{k-1}(D^{1,1} w\rvert_{\mathcal{H}}^{\w p} )d\mu\\
	&\qquad\quad+\frac{k}{2} \oint_{M} w^p w^q_{\nu} \sigma_{k-1}(D^{1,1} u\rvert_{\mathcal{H}} , D^{1,1} w\rvert_{\mathcal{H}}^{\w p,q} ) d\mu\Bigr].\\
	\end{align*}
	Thus, $S_2$ is symmetric with respect to $u, w^1, \cdots, w^k$.  And we have
	\begin{align}
	\begin{split}
	Q_1  =S_2 - \frac{k}{2(k-1)}\sum_{p\neq q} \oint_{M} u w^p_{\nu} \sigma_{k-1}(D^{1,1} w\rvert_{\mathcal{H}}^{\w p} ) d\mu - \frac{k}{2}\sum_{p\neq q} \oint_{M} w^p w^q_{\nu} \sigma_{k-1}(D^{1,1} u\rvert_{\mathcal{H}} , D^{1,1} w\rvert_{\mathcal{H}}^{\w p,q} )d\mu .
	\end{split}
	\end{align}
	For any $v\in C^{\infty}(\oo{X})$, denote by $\oo{D}^2v$ the Hessian with respect to the Levi-Civita connection $\oo{\nab}$ on $M$ and let $\oo{D}^{1,1}v=\oo{D}^2v\rvert_{\mathcal{H}}: T^{1,0}M\times T^{0,1}M\rightarrow \mathbb{C}$ be the restriction to the distribution $\mathcal{H}$. In terms of the local frame, $\oo{D}^{1,1}v$ is the Hermitian matrix $(\oo{\nab}^2_{Z_\alpha\oo{Z_\beta}}u)_{1\leq \alpha,\beta \leq n-1}$. Moreover, we denote by $L=L(\cdot,\cdot)$ the second fundamental form of $M$. Given $v\in C^\infty(\oo{X})$, it holds that
	\begin{equation*}
	D^{1,1} v(Z,\oo{W}) = \oo{D}^{1,1} v(Z,\oo{W}) + v_{\nu}\, L(Z,\oo{W}), \quad \mbox{ for any } Z, W \in T^{1,0}M.
	\end{equation*}
	For simplicity, we write it as
	\begin{equation}\label{ambient Hessian and the boundary Hessian}
		D^{1,1}v\rvert_{\mathcal{H}}=\oo{D}^{1,1}v+v_{\nu} L\rvert_{\mathcal{H}} ,
	\end{equation}
	where $L\rvert_{\mathcal{H}}: T^{1,0}M\times T^{0,1}M\rightarrow \mathbb{C}$ is the restriction of $L$.
	Define
	\begin{align*}
	U_3 & :=-\frac{k}{2(k-1)}\sum_{p\neq q} \oint_{M} u w^p_{\nu} \sigma_{k-1}(D^{1,1} w\rvert_{\mathcal{H}}^{\w p} ) d\mu, \\
	U_4 & :=-\frac{k}{2}\sum_{p\neq q} \oint_{M} w^p w^q_{\nu} \sigma_{k-1}(\oo{D}^{1,1} u , D^{1,1} w\rvert_{\mathcal{H}}^{\w p,q} )d\mu.\\
	Q_2 & :=-\frac{k}{2}\sum_{p\neq q} \oint_{M} w^p w^q_{\nu}u_{\nu} \sigma_{k-1}(L\rvert_{\mathcal{H}}, D^{1,1} w\rvert_{\mathcal{H}}^{\w p,q} ) d\mu.
	\end{align*}
	Thus,
	\begin{equation}\label{Q1Q2}
		Q_1=S_2+U_3+U_4+Q_2.
	\end{equation}
	By using the fact 
	\begin{equation*}
		(k-1)\sigma_{k-1}(\oo{D}^{1,1} u , D^{1,1} w\rvert_{\mathcal{H}}^{\w p,q} )=\oo{\nab}^2_{\alpha\bar{\beta}}u \cdot T_{k-2}(D^{1,1} w\rvert_{\mathcal{H}}^{\w p,q}),
	\end{equation*}
	we have
	\begin{equation*}
		U_4 =-\frac{k}{2(k-1)}\sum_{p\neq q} \oint_{M} w^p w^q_{\nu} \oo{\nab}_{\alpha\bar{\beta}}u \,T_{k-2}(D^{1,1} w\rvert_{\mathcal{H}}^{\w p,q})d\mu.
	\end{equation*}
	Integrating by parts along $M$ twice, we can rewrite $U_4$ into the form of $\oint uB(w^1, \cdots, w^p)d\mu$. As $U_3$ and $U_4$ are in the correct form, now we only need to consider $Q_2$.	To that end, take the symmetrization of $Q_2$:
	\begin{align*}
	\begin{split}
	S_3&:=-\frac{k}{2} \sum_{p\neq q\neq r} \Bigl[\frac{1}{k-2}\oint_{M} w^p w^q_{\nu}u_{\nu} \sigma_{k-1}(L\rvert_{\mathcal{H}}, D^{1,1} w\rvert_{\mathcal{H}}^{\w p,q} )d\mu \\
	&\qquad\qquad\quad+\frac{1}{2!(k-2)}\oint_{M} u w^p_{\nu} w^q_{\nu} \sigma_{k-1}(L\rvert_{\mathcal{H}}, D^{1,1} w\rvert_{\mathcal{H}}^{\w p,q} ) d\mu\\
	&\qquad\qquad\quad+\frac{1}{2!}\oint_{M} w^p w^q_{\nu}w^r_{\nu} \sigma_{k-1}( L\rvert_{\mathcal{H}}, D^{1,1} u\rvert_{\mathcal{H}}, D^{1,1} w\rvert_{\mathcal{H}}^{\w p,q, r} )d\mu \Bigr].\\
	\end{split}
	\end{align*}
	Note that $S_3$ is symmetric with respect to $u,w^1,\cdots,w^k$. By defining
	\begin{align*}
	U_5 & :=\frac{k}{4(k-2)}\sum_{p\neq q\neq r}\oint_{M} u w^p_{\nu} w^q_{\nu} \sigma_{k-1}(L\rvert_{\mathcal{H}},  D^{1,1} w\rvert_{\mathcal{H}}^{\w p,q} ) d\mu,\\
	U_6 & :=\frac{k}{4}\sum_{p\neq q\neq r}\oint_{M} w^p w^q_{\nu}w^r_{\nu} \sigma_{k-1}( L\rvert_{\mathcal{H}}, \oo{D}^{1,1} u, D^{1,1} w\rvert_{\mathcal{H}}^{\w p,q,r} ) d\mu,
	\\
	Q_3 & := \frac{k}{4}\sum_{p\neq q\neq r}\oint_{M} w^p w^q_{\nu}w^r_{\nu}u_{\nu} \sigma_{k-1}( L\rvert_{\mathcal{H}}, L\rvert_{\mathcal{H}},  D^{1,1} w\rvert_{\mathcal{H}}^{\w p,q,r} )d\mu,
	\end{align*}
	we have
	\begin{equation}\label{Q2Q3}
		Q_2= S_3+U_5+U_6+Q_3.
	\end{equation}
	
	As above, integration by parts along $M$ implies that both $U_5$ and $U_6$ are of the form $\oint uB(w^1,\cdots,w^k)d\mu$. Thus, we only need to consider $Q_3$.
	
	Proceeding in this way, for all $2\leq i\leq k$ we make the following definitions.  First, define
	\begin{align}
	\begin{split}\label{Si}
	S_i & := \frac{(-1)^ik}{2} \sum_{p_1\neq \cdots \neq p_i}  \Bigl[\frac{1}{(i-2)!(k+1-i)}\oint_{M} w^{p_1}  w^{p_2}_{\nu}\cdots w^{p_{i-1}}_{\nu} u_{\nu} \\
	&\qquad\qquad\qquad\qquad \times \sigma_{k-1}(\overbrace {L\rvert_{\mathcal{H}}, \cdots, L\rvert_{\mathcal{H}}}^{i-2}, D^{1,1} w\rvert_{\mathcal{H}}^{\w p_1,\cdots, p_{i-1}} ) d\mu\\
	&\quad\qquad\qquad\qquad+\frac{1}{(i-1)!(k+1-i)}\oint_{M} u w^{p_1}_{\nu} \cdots w^{p_{i-1}}_{\nu} \sigma_{k-1}(\overbrace {L\rvert_{\mathcal{H}}, \cdots, L\rvert_{\mathcal{H}}}^{i-2}, D^{1,1} w\rvert_{\mathcal{H}}^{\w p_1,\cdots, p_{i-1}} )d\mu \\
	&\quad\qquad\qquad\qquad+\frac{1}{(i-1)!}\oint_{M} w^{p_1}  w^{p_2}_{\nu}\cdots w^{p_{i}}_{\nu}  \sigma_{k-1}( \overbrace {L\rvert_{\mathcal{H}}, \cdots, L\rvert_{\mathcal{H}}}^{i-2},D^{1,1} u\rvert_{\mathcal{H}}, D^{1,1} w\rvert_{\mathcal{H}}^{\w p_1,\cdots, p_i} ) d\mu\Bigr] .
	\end{split}
	\end{align}
	Note that $S_i$ is symmetric with respect to $u,w^1,\cdots,w^k$.  Next, define
	\begin{align*}
	U_{2i-1}& := \frac{(-1)^{i+1}k}{2(i-1)!(k+1-i)}\sum_{p_1\neq \cdots \neq p_i}\oint_{M} u w^{p_1}_{\nu} \cdots w^{p_{i-1}}_{\nu} d\mu \\
	&\quad \quad  \quad \quad \quad \quad \quad  \quad \quad \quad \quad \quad \quad  \quad \quad \times \sigma_{k-1}(\overbrace {L\rvert_{\mathcal{H}}, \cdots, L\rvert_{\mathcal{H}}}^{i-2}, D^{1,1} w\rvert_{\mathcal{H}}^{\w p_1,\cdots, p_{i-1}} ) d\mu,\\
	U_{2i}& := \frac{(-1)^{i+1}k}{2(i-1)!}\sum_{p_1\neq \cdots \neq p_i} \oint_{M} w^{p_1}  w^{p_2}_{\nu}\cdots w^{p_i}_{\nu} \sigma_{k-1}(\overbrace {L\rvert_{\mathcal{H}}, \cdots, L\rvert_{\mathcal{H}}}^{i-2},\oo{D}^{1,1}u, D^{1,1} w\rvert_{\mathcal{H}}^{\w p_1,\cdots, p_i})d\mu,\\
	Q_i&:= \frac{(-1)^{i+1} k}{2(i-1)!}\sum_{p_1\neq \cdots \neq p_i}\oint_{M} w^{p_1} w^{p_2}_{\nu}\cdots w^{p_i}_{\nu} u_{\nu} \sigma_{k-1}(\overbrace {L\rvert_{\mathcal{H}}, \cdots, L\rvert_{\mathcal{H}}}^{i-1}, D^{1,1} w\rvert_{\mathcal{H}}^{\w p_1,\cdots,p_i} ) d\mu.	
	\end{align*}
	Then we have
	\begin{equation}
		Q_{i-1}=S_i+U_{2i-1}+U_{2i}+Q_i.
	\end{equation}
	
	As above, integration by parts along $M$ implies that both $U_{2i-1}$ and $U_{2i}$ are of the form $\oint uB(w^1,\cdots,w^k)d\mu$. We can inductively perform this argument until $i=k-1$ and get 
	\begin{equation*}
		Q_{k-1}=S_{k}+U_{2k-1}+U_{2k}+Q_k,
	\end{equation*}  	
	where 
	\begin{equation*}
		Q_k:= \frac{(-1)^{k+1} k}{2(k-1)!}\sum_{p_1\neq \cdots \neq p_k}\oint_{M} w^{p_1} w^{p_2}_{\nu}\cdots w^{p_k}_{\nu} u_{\nu} \sigma_{k-1}(\overbrace {L\rvert_{\mathcal{H}}, \cdots, L\rvert_{\mathcal{H}}}^{k-1}) d\mu.
	\end{equation*}
	It remains to write $Q_k$ into as the sum of a symmetric integral and a boundary integral whose integrand factors through $u$.  To that end, we define
	\begin{align}
	\begin{split}\label{Sk+1}
	S_{k+1} :=& \frac{(-1)^{k+1}k}{2(k-1)!}\sum_{p_1\neq \cdots \neq p_k} \Bigl[\oint_{M} w^{p_1}  w^{p_2}_{\nu}\cdots w^{p_{k}}_{\nu} u_{\nu}\sigma_{k-1}(L\rvert_{\mathcal{H}})d\mu \\
	&\qquad\qquad\qquad\qquad+\frac{1}{k}\oint_{M} u w^{p_1}_{\nu} \cdots w^{p_{k}}_{\nu} \sigma_{k-1}(L\rvert_{\mathcal{H}} )d\mu \Bigr] .
	\end{split}
	\end{align}
	If we define
	\begin{equation*}
		U_{2k+1} := \frac{(-1)^{k}}{2(k-1)!}\sum_{p_1\neq \cdots \neq p_k}\oint_{M} u w^{p_1}_{\nu} \cdots w^{p_{k}}_{\nu} \sigma_{k-1}(L\rvert_{\mathcal{H}})d\mu,
	\end{equation*}
	then 
	\begin{equation}\label{QkSk}
		Q_k = S_{k+1} + U_{2k+1}.
	\end{equation}
	Note that $S_{k+1}$ is symmetric with respect to $u,w^1,\cdots,w^k$ and $U_{2k+1}$ is of the form $\oint uB(w^1,\cdots,w^k)d\mu$. Therefore, $Q_k$ is now in the desired form.
	
	In summary, we have shown that
	\begin{equation}
	\label{final induction}
	S_0 - \sum_{i=1}^{k+1} S_i = k^2(k+1)\int_X u\,\sigma_{k}(D^{1,1}w^1,\cdots,D^{1,1}w^{k}) dx+ \sum_{i=1}^{2k+1} U_i
	\end{equation}
	and observed that the left-hand side is symmetric in $u,w^1,\cdots,w^k$ while the right-hand side is of the form $\oint uB(w^1,\cdots,w^k)d\mu$.  The result therefore follows by defining
	\begin{equation}\label{LkS}
		L_{k+1}(u,w^1,w^2,\cdots, w^k):=-\frac{1}{k^2(k+1)}\Bigl(S_0 - \sum_{i=1}^{k+1} S_i \Bigr).
	\end{equation}
\end{proof}

For any $u\in C^{\infty}(\oo{X})$, we define
\begin{align*}
\mathcal{E}_{k+1}(u):=L_{k+1}(\overbrace{u, \cdots, u}^{k+1}).
\end{align*}

	We are going to show in the proof of Proposision \ref{extension of Ek} that this definition of $\mathcal{E}_{k+1}$ coincides with the expression given in \eqref{Ek}, and from there one can see $\mathcal{E}_{k+1}(u)$ does not involve second order transverse derivatives. 	

\begin{prop}\label{extension of Ek}
	The functional $\mathcal{E}_{k+1}$ depends on at most second order tangential derivatives on $M$ and at most first order transverse derivatives. In particular, we only need $u\in C^2(X)\cap C^2(M)\cap C^1(\oo{X})$ to define $\mathcal{E}_{k+1}(u)$.
\end{prop}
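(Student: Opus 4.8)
The plan is to first verify the closed-form expression \eqref{Ek} for $\mathcal{E}_{k+1}(u)=L_{k+1}(u,\dots,u)$ by specializing the construction in the proof of Proposition~\ref{main prop} to $w^1=\dots=w^k=u$, and then to read the regularity requirement off directly from \eqref{Ek} with the help of \eqref{ambient Hessian and the boundary Hessian}.

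Setting $w^1=\dots=w^k=u$ in \eqref{S0} — where all the integrands now coincide — one gets $S_0(u,\dots,u)=-k(k+1)\int_X u_iu_{\bar j}T_{k-1}(u)_{j\bar i}\,dx$. Performing the single interior integration by parts that leads to \eqref{S0T}, via \eqref{T divergence free} together with the relation $u_{i\bar j}T_{k-1}(u)_{j\bar i}=k\,\sigma_k(D^{1,1}u)$ from \eqref{basic identies for elementary polynomials}, yields $S_0(u,\dots,u)=k^2(k+1)\int_X u\,\sigma_k(D^{1,1}u)\,dx+P(u,\dots,u)$, with $P$ the boundary integral \eqref{P}. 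On the diagonal the bracketed double sum over $p\neq q$ in \eqref{P} collapses to zero (each $w^pu_i$-term cancels the corresponding $w^p_iw^q$-term, and likewise for the barred terms), so $P(u,\dots,u)$ reduces to a fixed multiple of $\oint_M u\,u_iT_{k-1}(u)_{n\bar i}+\oint_M u\,u_{\bar j}T_{k-1}(u)_{j\bar n}$; collapsing \eqref{S1} on the diagonal produces exactly the same quantity, so $S_1(u,\dots,u)=P(u,\dots,u)$. For $2\le i\le k+1$, collapsing \eqref{Si} and \eqref{Sk+1} on the diagonal reduces each ordered sum $\sum_{p_1\neq\dots\neq p_i}$ to a counting factor, and a short count shows $S_i(u,\dots,u)$ equals the $S_i(u)$ of \eqref{Si def}: the three pieces of \eqref{Si} contribute, respectively, $(i-1)$, $1$ and $(k+1-i)$ copies of a common integral, and $(i-1)+1+(k+1-i)=k+1$, which together with the prefactor $\frac{k}{2}$ and the combinatorial count reproduces $(-1)^i\frac{k(k+1)}{2}\binom{k}{i-1}$. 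Substituting all of this into \eqref{LkS}, the $P$ and $S_1$ contributions cancel and one is left with $\mathcal{E}_{k+1}(u)=-\int_X u\,\sigma_k(D^{1,1}u)\,dx+\frac{1}{k^2(k+1)}\sum_{i=2}^{k+1}S_i(u)$, which is \eqref{Ek}.

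The regularity statement then follows formally. In each $S_i(u)$ of \eqref{Si def}, the only derivatives of $u$ occurring are $u$ itself, the first transverse derivative $u_\nu$, and the restricted complex Hessian $D^{1,1}u\rvert_{\mathcal{H}}$; and \eqref{ambient Hessian and the boundary Hessian} gives $D^{1,1}u\rvert_{\mathcal{H}}=\oo{D}^{1,1}u+u_\nu\,L\rvert_{\mathcal{H}}$, where $\oo{D}^{1,1}u$ is built from the intrinsic Levi-Civita connection of $M$ and hence involves only second-order tangential derivatives of $u$ on $M$, while $L\rvert_{\mathcal{H}}$ carries no derivatives of $u$. Expanding $\sigma_{k-1}$ by multilinearity, every boundary integral in \eqref{Ek} therefore has an integrand polynomial in $u$, $u_\nu$, $\oo{D}^{1,1}u$ and the second fundamental form of $M$, while the interior integral $-\int_X u\,\sigma_k(D^{1,1}u)\,dx$ is taken over the open set $X$ on which $u$ is assumed $C^2$. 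Hence $\mathcal{E}_{k+1}(u)$ is well-defined once $u\in C^2(X)\cap C^2(M)\cap C^1(\oo{X})$, which is the assertion; this is the general-$k$ analogue of Remark~\ref{rmk domain of E_3}.

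I expect the only genuinely computational step to be the specialization to the diagonal in the second paragraph — in particular matching the combinatorial prefactors with those in \eqref{Si def} and checking the cancellation $S_1(u,\dots,u)=P(u,\dots,u)$; the regularity conclusion in the third paragraph is purely formal once \eqref{Ek} and \eqref{ambient Hessian and the boundary Hessian} are in hand.
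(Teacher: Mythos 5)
Your proposal is correct and follows essentially the same route as the paper: specialize the construction of Proposition \ref{main prop} to the diagonal $w^1=\cdots=w^k=u$, observe that $P(u,\dots,u)=S_1(u)$ so that \eqref{Ek} with the coefficients of \eqref{Si def} emerges from \eqref{LkS}, and then read off the regularity from \eqref{ambient Hessian and the boundary Hessian}. You simply carry out in detail the combinatorial count $(i-1)+1+(k+1-i)=k+1$ that the paper leaves implicit when asserting $S_i(u)$ equals the expression in \eqref{Si def}.
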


\begin{proof}
	Given $u\in C^{\infty}(\oo{X})$, for $0\leq i\leq k+1$ and $1\leq j\leq 2k+1$, we denote
	\begin{align*}
	S_i(u):=S_i(u,\cdots, u), \quad U_j(u):=U_j(u,\cdots, u).
	\end{align*}
	If we set $w^1=w^2=\cdots=w^k=u\in C^{\infty}(X)$ in \eqref{TS1}, then we have $P=S_1(u)$. Thus, \eqref{S0T} implies that
	\begin{equation*}
	S_0(u)=k^2(k+1)\int_X u\sigma_k(D^{1,1}u)dx+S_1(u).
	\end{equation*}
	Combining this with \eqref{LkS}, we obtain
	\begin{equation}
	\mathcal{E}_{k+1}(u)=-\int_X u\sigma_k(D^{1,1}u)dx+\frac{1}{k^2(k+1)}\sum_{i=2}^{k+1}S_i(u).
	\end{equation}
	Note that by \eqref{Si} and \eqref{Sk+1}, we have that for $2\leq i\leq k+1$,
	\begin{equation}
	S_i(u)=(-1)^i\frac{k(k+1)}{2}\binom{k}{i-1}\oint_{M}u u_{\nu}^{i-1}\sigma_{k-1}(\overbrace{L\rvert_{\mathcal{H}},\cdots,L\rvert_{\mathcal{H}}}^{i-2}, \overbrace{D^{1,1}u\rvert_{\mathcal{H}}, \cdots, D^{1,1} u}^{k+1-i}\rvert_{\mathcal{H}}).
	\end{equation}
	The result therefore follows immediately by \eqref{ambient Hessian and the boundary Hessian}.
\end{proof}

\section{Proof of Theorem \ref{thm:main_thm}}\label{sec:proof of main theorem}
It is straightforward to compute the first and second variations of the energy functional $\mathcal{E}_{k+1}$ associated to the symmetric multilinear form constructed by Proposition \ref{main prop}.

\begin{prop}
	\label{prop:first_variation}
	Let $X\subset \mathbb{C}^n$ be a bounded smooth domain with boundary $M=\partial X$.  Let $u,v\in C^\infty(\oo{X})$ and suppose that $v\vert_M=0$.  Then
	\begin{equation}
	\label{eqn:first_variation}
	\frac{d}{dt}\Big|_{t=0}\mathcal{E}_{k+1}(u+tv) = -(k+1)\int_X v\,\sigma_k(D^{1,1}u,\cdots,D^{1,1}u) dx.
	\end{equation}
\end{prop}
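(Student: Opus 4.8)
The plan is to exploit the fundamental symmetry of the multilinear form $L_{k+1}$ provided by Proposition \ref{main prop}, which is exactly the structural fact that makes the variational computation essentially formal. Since $\mathcal{E}_{k+1}(u) = L_{k+1}(u,\dots,u)$ with $L_{k+1}$ a symmetric $(k+1)$-linear form, the binomial-type expansion gives
\begin{equation*}
\mathcal{E}_{k+1}(u+tv) = \sum_{j=0}^{k+1} \binom{k+1}{j} t^j\, L_{k+1}(\overbrace{v,\dots,v}^{j},\overbrace{u,\dots,u}^{k+1-j}),
\end{equation*}
so differentiating once and setting $t=0$ yields $\frac{d}{dt}\big|_{t=0}\mathcal{E}_{k+1}(u+tv) = (k+1)\, L_{k+1}(v,\overbrace{u,\dots,u}^{k})$. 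Thus everything reduces to evaluating $L_{k+1}(v,u,\dots,u)$ when $v|_M = 0$.

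Next I would plug into the defining formula \eqref{Lk}, namely $L_{k+1}(v,u,\dots,u) = -\int_X v\,\sigma_k(D^{1,1}u,\dots,D^{1,1}u)\,dx + \oint_M v\, B_k(u,\dots,u)\,d\mu$. The boundary term vanishes identically because $v|_M = 0$, and the interior term is precisely $-\int_X v\,\sigma_k(D^{1,1}u,\dots,D^{1,1}u)\,dx$. Combining with the factor $(k+1)$ gives exactly \eqref{eqn:first_variation}. The only subtlety is that \eqref{Lk} as stated in Proposition \ref{main prop} puts $v$ (playing the role of ``$u$'' in that proposition's notation) in the distinguished first slot, whereas here I want $v$ in the first slot and $u$'s in the remaining slots; but this is allowed precisely because $L_{k+1}$ is \emph{symmetric}, so I may freely permute the arguments to place $v$ first.

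The computation is entirely routine once the symmetry of $L_{k+1}$ is invoked, so there is no real obstacle: the ``hard part'' was already done in proving Proposition \ref{main prop}. One minor point worth stating carefully is the interchange of differentiation and the finite-dimensional polynomial expansion in $t$ — this is immediate since $t \mapsto \mathcal{E}_{k+1}(u+tv)$ is a genuine polynomial of degree $\leq k+1$ in $t$ (the integrand being multilinear in its arguments and hence polynomial in $t$), so term-by-term differentiation is valid with no convergence or regularity concerns beyond $u,v \in C^\infty(\oo{X})$. I would also remark that the same expansion immediately gives all higher variations $\frac{d^j}{dt^j}\big|_{t=0}\mathcal{E}_{k+1}(u+tv) = -\frac{(k+1)!}{(k+1-j)!}\int_X v\,\sigma_k(\overbrace{D^{1,1}v,\dots,D^{1,1}v}^{j-1},\overbrace{D^{1,1}u,\dots,D^{1,1}u}^{k+1-j})\,dx$ for $1 \leq j \leq k+1$, as already anticipated in the introduction, by differentiating $j$ times and using $v|_M = 0$ to kill every boundary contribution.
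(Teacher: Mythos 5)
Your proposal is correct and follows essentially the same route as the paper: both expand $\mathcal{E}_{k+1}(u+tv)=L_{k+1}(u+tv,\dots,u+tv)$ using the symmetry of $L_{k+1}$ to obtain $(k+1)L_{k+1}(v,u,\dots,u)$, and then observe that the boundary integral in \eqref{Lk} vanishes since $v\vert_M=0$. The extra remarks on polynomial dependence in $t$ and on the higher variations are harmless elaborations of the same argument.
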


\begin{proof}
	Since $L_{k+1}$ is symmetric, we compute that
	\begin{equation*}
		\frac{d}{dt}\Big|_{t=0}\mathcal{E}_{k+1}(u+tv) = (k+1)L_{k+1}(v,u,\cdots,u) .
	\end{equation*}
	Since $v\vert_M=0$, we see that the boundary integral in \eqref{Lk} vanishes. The result therefore follows.
\end{proof}

\begin{prop}
	\label{prop:second_variation}
	Let $X\subset \mathbb{C}^n$ be a bounded smooth domain with boundary $M=\partial X$.  Let $u,v\in C^\infty(\oo{X})$ and suppose that $v\vert_M=0$.  Then
	\begin{equation}\label{eqn:second_variation}
		\frac{d^2}{dt^2}\Big|_{t=0}\mathcal{E}_{k+1}(u+tv) = (k+1)\int_X v_iv_{\bar{j}} T_{k-1}(D^{1,1}u)_{j\bar{i}}dx .
	\end{equation}
	In particular, if $u\in \Gamma_k^+$, then
	\begin{equation}\label{eqn:Ek convexity}
		\frac{d^2}{dt^2}\Big|_{t=0}\mathcal{E}_{k+1}(u+tv)\geq 0
	\end{equation}
	for all $v\in C^\infty(\oo{X})$ such that $v\vert_M=0$.
\end{prop}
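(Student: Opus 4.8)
The plan is to obtain \eqref{eqn:second_variation} by differentiating the first variation formula \eqref{eqn:first_variation} once more. Since $v\vert_M=0$, the trace of $u+tv$ on $M$ does not depend on $t$, so Proposition \ref{prop:first_variation} applies verbatim with $u$ replaced by $u+tv$, giving
\[
	\frac{d}{dt}\mathcal{E}_{k+1}(u+tv)=-(k+1)\int_X v\,\sigma_k\bigl(\overbrace{D^{1,1}(u+tv),\cdots,D^{1,1}(u+tv)}^{k}\bigr)\,dx.
\]
Differentiating again and using that the polarized form $\sigma_k(\cdot,\cdots,\cdot)$ is multilinear (so the derivative hits each of the $k$ equal slots and produces a factor $k$), I get at $t=0$
\[
	\frac{d^2}{dt^2}\Big|_{t=0}\mathcal{E}_{k+1}(u+tv)=-k(k+1)\int_X v\,\sigma_k\bigl(D^{1,1}v,\overbrace{D^{1,1}u,\cdots,D^{1,1}u}^{k-1}\bigr)\,dx.
\]
Next I would rewrite the integrand with the polarized version of the first identity in \eqref{basic identies for elementary polynomials}, namely $\sigma_k(A_1,\cdots,A_k)=\tfrac1k(A_1)_{i\bar{j}}\,T_{k-1}(A_2,\cdots,A_k)_{j\bar{i}}$, which is immediate from Definitions \ref{sigmak polarization} and \ref{Tk polarization}; applied with $A_1=D^{1,1}v$ and $A_2=\cdots=A_k=D^{1,1}u$ this gives $\frac{d^2}{dt^2}\big|_{t=0}\mathcal{E}_{k+1}(u+tv)=-(k+1)\int_X v\,v_{i\bar{j}}\,T_{k-1}(u)_{j\bar{i}}\,dx$.

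The remaining step is a single integration by parts. I would apply the divergence theorem \eqref{divergence theorem} to the $(1,0)$-form with components $a_j:=v\,v_i\,T_{k-1}(u)_{j\bar{i}}$. Expanding $\nab_{\bar{j}}a_j$ by the Leibniz rule produces three terms: the one carrying the derivative on $T_{k-1}(u)_{j\bar{i}}$ vanishes by the divergence-free property \eqref{T divergence free}, and the boundary integral $\tfrac{1}{\sqrt{2}}\oint_M v\,v_i\,T_{k-1}(u)_{n\bar{i}}\,d\mu$ produced by \eqref{divergence theorem} vanishes since $v\vert_M=0$. This yields $\int_X v\,v_{i\bar{j}}\,T_{k-1}(u)_{j\bar{i}}\,dx=-\int_X v_i\,v_{\bar{j}}\,T_{k-1}(u)_{j\bar{i}}\,dx$, which is exactly \eqref{eqn:second_variation}.

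For the convexity statement \eqref{eqn:Ek convexity}, observe that $u\in\Gamma_k^+$ means $D^{1,1}u\in\Gamma_k^+$ as a Hermitian matrix, so the nonnegativity fact for Newton transformation tensors recalled in Section \ref{sec:bg} (applied with $k$ there replaced by $k-1$, to the $(k-1)$-tuple of copies of $D^{1,1}u$) shows $T_{k-1}(u)_{j\bar{i}}$ is a nonnegative Hermitian matrix. Since $v$ is real-valued we have $v_{\bar{j}}=\overline{v_j}$, so the integrand $v_i v_{\bar{j}}T_{k-1}(u)_{j\bar{i}}=T_{k-1}(u)_{j\bar{i}}\,v_i\,\overline{v_j}$ is pointwise nonnegative, and \eqref{eqn:Ek convexity} follows. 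I do not anticipate a genuine obstacle here; the only points needing care are tracking the index contractions and signs in the integration by parts, and invoking the G{\aa}rding inequality with the correct shift in the parameter $k$.
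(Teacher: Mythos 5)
Your proposal is correct and follows essentially the same route as the paper: express the second derivative as $-k(k+1)\int_X v\,\sigma_k(D^{1,1}v,D^{1,1}u,\cdots,D^{1,1}u)\,dx$, convert to the Newton tensor via the polarized identity, integrate by parts using \eqref{T divergence free} and $v\vert_M=0$, and invoke nonnegativity of $T_{k-1}(D^{1,1}u)$ on $\Gamma_k^+$. The only cosmetic difference is that you differentiate the first variation formula while the paper reads the second derivative directly off the symmetry of $L_{k+1}$ as $k(k+1)L_{k+1}(v,v,u,\cdots,u)$.
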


\begin{proof}
	Since $L_{k+1}$ is symmetric, we compute that
	\begin{equation*}
		\frac{d^2}{dt^2}\Big|_{t=0} \mathcal{E}_{k+1}(u+tv) = k(k+1)L_{k+1}(v,v,u,\cdots,u).
	\end{equation*}
	Since $v\vert_M=0$, it follows that
	\begin{align*}
	\frac{d^2}{dt^2}\Big|_{t=0}\mathcal{E}_{k+1}(u+tv) & = -k(k+1)\int_X v\,\sigma_k(D^{1,1}v,D^{1,1}u,\cdots,D^{1,1}u) dx, \\
	& = -(k+1)\int_X v T_{k-1}(D^{1,1}u)_{j\bar{i}} v_{i\bar{j}}dx \\
	& = (k+1)\int_X v_iv_{\bar{j}} T_{k-1}(D^{1,1}u)_{j\bar{i}} dx.
	\end{align*}
	The last conclusion follows from the fact that if $u\in \Gamma_k^+$, then $T_{k-1}(D^{1,1}u)_{i\bar{j}}$ is nonnegative.
\end{proof}

\begin{rmk}\label{rmk extension of variations}
	By Remark \ref{rmk domain of E_3} and Proposition \ref{extension of Ek}, the functional $\mathcal{E}_{k+1}$ is actually well-defined on $C^2(X)\cap C^2(M)\cap C^1(\oo{X})$. Given $u\in \oo{C_{f,k}}$, $u$ is contained in $C^{1,1}(\oo{X})\cap C^2(M)$. We can construct $u_{\varepsilon}\in C^{\infty}(\oo{X})$ such that as $\varepsilon\rightarrow 0$, $\|u_{\varepsilon}-u\|_{C^1(\oo{X})}\rightarrow 0$ and $\|u_{\varepsilon}-u\|_{W^{2,p}(X)}\rightarrow 0$ for any $1\leq p <\infty$.  By a density argument, the variation identities \eqref{eqn:first_variation}, \eqref{eqn:second_variation} and the convexity property \eqref{eqn:Ek convexity} actually hold for $u\in \oo{C_{f,k}}$ and $v\in C^{1,1}(\oo{X})$ with $v\vert_M=0$.
\end{rmk}

We are now ready to prove Theorem \ref{thm:main_thm}.
\begin{proof}[Proof of Theorem \ref{thm:main_thm}]
	By Proposition \ref{prop:first_variation} and Remark \ref{rmk extension of variations}, the solution $u_f$ to \eqref{eqn:degenerate_dirichlet_problem} is a critical point of the functional $\mathcal{E}_{k+1}\colon \overline{C_{f,k}}\to \mathbb{R}$.  By Proposition \ref{prop:second_variation} and Remark \ref{rmk extension of variations}, the restriction $\mathcal{E}_{k+1}\colon\overline{C_{f,k}}\to \mathbb{R}$ is a convex functional.  Since $\overline{C_{f,k}}$ is convex, $u_f$ realizes the infimum of $\mathcal{E}_{k+1}\colon\overline{C_{f,k}}\to \mathbb{R}$.  Indeed, if not, then there is a $u\in\overline{C_{f,k}}$ such that $\mathcal{E}_{k+1}(u)<\mathcal{E}_{k+1}(u_f)$.  Since $\overline{C_{f,k}}$ is convex, it follows that $tu+(1-t)u_f\in\overline{C_{f,k}}$ for all $t\in[0,1]$.  Denote $\mathcal{E}_{k+1}(t):=\mathcal{E}_{k+1}(tu+(1-t)u_f)$.  Since $\mathcal{E}_{k+1}(u)<\mathcal{E}_{k+1}(u_f)$, there exists a $t^\ast\in[0,1]$ such that $\mathcal{E}_{k+1}^\prime(t^\ast)<0$.  This contradicts the facts that $\mathcal{E}_{k+1}^\prime(0)=0$ and $\mathcal{E}_{k+1}^{\prime\prime}\geq0$ for all $t\in[0,1]$.

\end{proof}

\section*{Acknowledgments}
The first author would like to thank Pengfei Guan for suggestions and interest of this work. The second author is thankful to Song-Ying Li and Xiangwen Zhang for the friendly discussions about the complex $k$-Hessian equation.
\bibliographystyle{plain}
\bibliography{references}

\end{document}